\documentclass[twoside,leqno,10pt, A4]{amsart}
\usepackage{amsfonts}
\usepackage{amsmath}
\usepackage{amscd}
\usepackage{amssymb}
\usepackage{amsthm}
\usepackage{amsrefs}
\usepackage{latexsym}
\usepackage{mathrsfs}
\usepackage{bbm}
\usepackage{enumerate}
\usepackage{graphicx}

\usepackage{amsfonts}
\usepackage{amsmath}
\usepackage{amscd}
\usepackage{amssymb}
\usepackage{amsthm}
\usepackage{amsrefs}
\usepackage{latexsym}
\usepackage{mathrsfs}
\usepackage{bbm}
\usepackage{amscd}
\usepackage{amssymb}
\usepackage{amsthm}
\usepackage{amsrefs}
\usepackage{latexsym}
\usepackage{mathrsfs}
\usepackage{bbm}
\usepackage{enumerate}
\usepackage{graphicx}
\usepackage{color}
\setlength{\textwidth}{18.2cm}
\setlength{\oddsidemargin}{-0.7cm}
\setlength{\evensidemargin}{-0.7cm}
\setlength{\topmargin}{-0.7cm}
\setlength{\headheight}{0cm}
\setlength{\headsep}{0.5cm}
\setlength{\topskip}{0cm}
\setlength{\textheight}{23.9cm}
\setlength{\footskip}{.5cm}

\begin{document}

\newtheorem{theorem}[subsection]{Theorem}
\newtheorem{proposition}[subsection]{Proposition}
\newtheorem{lemma}[subsection]{Lemma}
\newtheorem{corollary}[subsection]{Corollary}
\newtheorem{conjecture}[subsection]{Conjecture}
\newtheorem{prop}[subsection]{Proposition}
\numberwithin{equation}{section}
\newcommand{\mr}{\ensuremath{\mathbb R}}
\newcommand{\mc}{\ensuremath{\mathbb C}}
\newcommand{\dif}{\mathrm{d}}
\newcommand{\intz}{\mathbb{Z}}
\newcommand{\ratq}{\mathbb{Q}}
\newcommand{\natn}{\mathbb{N}}
\newcommand{\comc}{\mathbb{C}}
\newcommand{\rear}{\mathbb{R}}
\newcommand{\prip}{\mathbb{P}}
\newcommand{\uph}{\mathbb{H}}
\newcommand{\fief}{\mathbb{F}}
\newcommand{\majorarc}{\mathfrak{M}}
\newcommand{\minorarc}{\mathfrak{m}}
\newcommand{\sings}{\mathfrak{S}}
\newcommand{\fA}{\ensuremath{\mathfrak A}}
\newcommand{\mn}{\ensuremath{\mathbb N}}
\newcommand{\mq}{\ensuremath{\mathbb Q}}
\newcommand{\half}{\tfrac{1}{2}}
\newcommand{\f}{f\times \chi}
\newcommand{\summ}{\mathop{{\sum}^{\star}}}
\newcommand{\chiq}{\chi \bmod q}
\newcommand{\chidb}{\chi \bmod db}
\newcommand{\chid}{\chi \bmod d}
\newcommand{\sym}{\text{sym}^2}
\newcommand{\hhalf}{\tfrac{1}{2}}
\newcommand{\sumstar}{\sideset{}{^*}\sum}
\newcommand{\sumprime}{\sideset{}{'}\sum}
\newcommand{\sumprimeprime}{\sideset{}{''}\sum}
\newcommand{\sumflat}{\sideset{}{^\flat}\sum}
\newcommand{\shortmod}{\ensuremath{\negthickspace \negthickspace \negthickspace \pmod}}
\newcommand{\V}{V\left(\frac{nm}{q^2}\right)}
\newcommand{\sumi}{\mathop{{\sum}^{\dagger}}}
\newcommand{\mz}{\ensuremath{\mathbb Z}}
\newcommand{\leg}[2]{\left(\frac{#1}{#2}\right)}
\newcommand{\muK}{\mu_{\omega}}
\newcommand{\thalf}{\tfrac12}
\newcommand{\lp}{\left(}
\newcommand{\rp}{\right)}
\newcommand{\Lam}{\Lambda_{[i]}}
\newcommand{\lam}{\lambda}
\def\L{\fracwithdelims}
\def\om{\omega}
\def\pbar{\overline{\psi}}
\def\phi{\varphi}
\def\phis{\phi^*}
\def\lam{\lambda}
\def\lbar{\overline{\lambda}}
\newcommand\Sum{\Cal S}
\def\Lam{\Lambda}
\newcommand{\sumtt}{\underset{(d,2)=1}{{\sum}^*}}
\newcommand{\sumt}{\underset{(d,2)=1}{\sum \nolimits^{*}} \widetilde w\left( \frac dX \right) }

\theoremstyle{plain}
\newtheorem{conj}{Conjecture}
\newtheorem{remark}[subsection]{Remark}

\providecommand{\re}{\mathop{\rm Re}}
\providecommand{\im}{\mathop{\rm Im}}
\def\cI{\mathcal{I}}
\def\cL{\mathcal{L}}
\def\E{\mathbb{E}}

\makeatletter
\def\widebreve{\mathpalette\wide@breve}
\def\wide@breve#1#2{\sbox\z@{$#1#2$}%
     \mathop{\vbox{\m@th\ialign{##\crcr
\kern0.08em\brevefill#1{0.8\wd\z@}\crcr\noalign{\nointerlineskip}%
                    $\hss#1#2\hss$\crcr}}}\limits}
\def\brevefill#1#2{$\m@th\sbox\tw@{$#1($}%
  \hss\resizebox{#2}{\wd\tw@}{\rotatebox[origin=c]{90}{\upshape(}}\hss$}
\makeatletter

\title[Lower bounds for negative moments of $\zeta'(\rho)$]{Lower bounds for negative moments of $\zeta'(\rho)$}

\author{Peng Gao and Liangyi Zhao}

\begin{abstract}
 We establish lower bounds for the discrete $2k$-th moment of the derivative of the Riemann zeta function at nontrivial zeros for all $k<0$ under the Riemann hypothesis (RH) and the assumption that all zeros of $\zeta(s)$ are simple.
\end{abstract}

\maketitle

\noindent {\bf Mathematics Subject Classification (2010)}: 11M06, 11M26 \newline

\noindent {\bf Keywords}: lower bounds, negative moments, nontrivial zeros, Riemann zeta function

\section{Introduction}
\label{sec 1}

  It is an important subject to study various types of moments of the Riemann zeta function $\zeta(s)$ on the critical line as they have many interesting applications. Take for instance,  the discrete $2k$-th moment $J_k(T)$ of the derivative of $\zeta(s)$ at its nontrivial zeros $\rho$, given by
\begin{align*}
J_k(T) =\sum_{0<\Im(\rho)\le T}|\zeta'(\rho)|^{2k}.
\end{align*}

  Note that $J_k(T)$ is well-defined for all real $k \geq 0$ with $J_0(T)=N(T)$, the number of zeros of $\zeta(s)$
in the rectangle with vertices $0, 1, 1 +iT$ and $iT$. In this paper, we assume that all zeros $\zeta(s)$ are simple which is widely believed to true. This allows one to extend the definition of $J_k(T)$ to hold for all real $k$. Then it is known (see \cite[Theorem 14.27]{ET} and \cite{Ng04}) that the behavior of $J_k(T)$ for $k<0$ is closely related to the size of the summatory function of the M\"obius function. \newline

  Independently,  S. M. Gonek \cite{Gonek1} and D. Hejhal \cite{Hejhal}  conjectured that for any real $k$,
\begin{align}
\label{Jksim}
J_k(T) \asymp T (\log T)^{(k+1)^2}.
\end{align}
  Via the random matrix theory, C. P. Hughes, J. P. Keating and N. O'Connell \cite{HKO} conjectured an asymptotic formula for $J_k(T)$, which also suggests that \eqref{Jksim} may not be valid for $k \leq -3/2$. This formula was recovered by H. M. Bui, S. M. Gonek and M. B. Milinovich \cite{BGM} using a hybrid Euler-Hadamard product. \newline

 Assuming the truth of the Riemann hypothesis (RH), S. M. Gonek \cite{Gonek} proved an asymptotic formula for $J_1(T)$ and N. Ng \cite{Ng} established \eqref{Jksim} for $k=2$.  Much progress has since been made  towards establishing \eqref{Jksim} for the case $k \geq 0$.  In fact, based on the work in \cites{MN1, Milinovich, Kirila, Gao2021-5}, we know that \eqref{Jksim} is valid for all real $k \geq 0$ on RH. \newline

 On the other hand, there are relatively fewer results for \eqref{Jksim} with $k<0$. Under RH and the assumption that all $\rho$'s are simple, S. M. Gonek \cite{Gonek1} obtained the sharp lower bound for $J_{-1}(T)$. In \cite{MN}, M. B. Milinovich and N. Ng further computed the implied constant to show that for any $\varepsilon>0$, 
\begin{align} \label{Jlowerbound}
J_{-1}(T) \geq (1-\varepsilon)\frac {3}{2\pi^3}T.
\end{align}
The bound in \eqref{Jlowerbound} is consistent with \eqref{Jksim} with the constant involved being half of that conjectured in \cites{Gonek1, HKO}.  Under the same assumptions, W. Heap, J. Li and J. Zhao \cite{HLZ} proved that the lower bounds implicit in the ``$\asymp$" of \eqref{Jksim} hold for all rational $k<0$. \newline

  It is the aim of this paper to further extend the lower bounds for $J_k(T)$ obtained in \cite{HLZ} to all real $k<0$. Our result is as follows.
\begin{theorem}
\label{thmlowerboundJ}
   Assuming RH and zeros of $\zeta(s)$ are simple. For large $T$ and any $k <0$, we have
\begin{align*}
   J_k(T) \gg_k T (\log T)^{(k+1)^2}.
\end{align*}
\end{theorem}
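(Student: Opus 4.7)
The proof will follow the H\"older-based mollifier framework of Heap--Li--Zhao \cite{HLZ}, which established the analogous lower bound for rational $k<0$. The only obstruction in \cite{HLZ} to passing from rational to real $k$ is the choice of a single-piece mollifier whose Dirichlet coefficients form a generalized-divisor function $d_{-(k+1)}$ most easily manipulated when $k+1\in\mathbb{Q}$. We therefore replace it with a multi-piece mollifier of the kind introduced by Radziwi\l\l--Soundararajan and Heap--Soundararajan, built from truncated Euler-product polynomials on disjoint prime blocks; this construction accommodates arbitrary real exponents with no additional combinatorial overhead.

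\textbf{Construction.} Fix $k<0$, a small $\theta>0$, and a large integer $R=R(k)$. Partition the primes in $[2,T^{\theta}]$ into disjoint blocks $I_{1},\dots,I_{R}$ with $I_{j}\subset[T^{\theta_{j-1}},T^{\theta_{j}}]$, where $0=\theta_{0}<\theta_{1}<\cdots<\theta_{R}=\theta$. For each $j$ put
\[
\mathcal{N}_{j}(s)=\sum_{\substack{n\ge 1 \\ p\mid n\Rightarrow p\in I_{j} \\ \Omega(n)\le \ell_{j}}}\frac{c_{-(k+1)}(n)}{n^{s}},
\]
where $c_{\alpha}(n)$ denotes the multiplicative coefficient of $n^{-s}$ in the Euler product $\prod_{p}(1-p^{-s})^{-\alpha}$ (defined for every real $\alpha$), and $\ell_{j}$ is a truncation parameter. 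Define $\mathcal{N}(s)=\prod_{j=1}^{R}\mathcal{N}_{j}(s)$. With a suitable choice of $\theta_{j}$ and $\ell_{j}$ the total length of $\mathcal{N}$ stays below $T^{\theta'}$ for some $\theta'<1$, while $\mathcal{N}(s)$ models $\zeta(s)^{-(k+1)}$ in mean square along the critical line.

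\textbf{Moment inputs and H\"older.} The main analytic inputs are estimates for two discrete sums over nontrivial zeros. A twisted first moment of the shape
\[
S_{1}=\sum_{0<\Im(\rho)\le T}\zeta'(\rho)\,|\mathcal{N}(\rho)|^{2}
\]
will be evaluated asymptotically by the explicit-formula technique of Gonek and Milinovich--Ng; because the coefficients of $|\mathcal{N}|^{2}$ are multiplicative, the main term factors as an Euler product whose Selberg--Delange asymptotic produces a factor of the order $(\log T)^{(k+1)^{2}}$. An auxiliary upper-bound input of the shape
\[
S_{2}=\sum_{0<\Im(\rho)\le T}|\mathcal{N}(\rho)|^{2p}\,|\zeta'(\rho)|^{q},
\]
for suitable real parameters $p,q>0$, is controlled via mean-value theorems for Dirichlet polynomials at critical-line zeros together with the positive-moment upper bound $J_{k'}(T)\ll_{k'}T(\log T)^{(k'+1)^{2}}$ (valid for $k'>0$ on RH, cf.~\cite{Gao2021-5}). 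Combining these through H\"older's inequality, in a form $|S_{1}|^{\alpha}\le S_{2}^{\beta}\,J_{k}(T)^{\gamma}$ with exponents $\alpha,\beta,\gamma$ depending continuously on $k$, and rearranging, yields $J_{k}(T)\gg T(\log T)^{(k+1)^{2}}$. Continuity of the H\"older exponents in $k$ is precisely what allows the argument to cover all real $k<0$ at once, bypassing the rationality used in \cite{HLZ}.

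\textbf{Main obstacle.} The central technical difficulty is the asymptotic evaluation of $S_{1}$ with the multi-piece mollifier at the real parameter $-(k+1)$. One must verify that the $\Omega(n)>\ell_{j}$ truncation errors within each block, as well as the cross-interactions between distinct blocks, contribute only to a lower-order remainder, so that the main term is genuinely captured by the full Euler product $\prod_{p}(1-1/p)^{-(k+1)^{2}}$ whose logarithm furnishes the exponent $(k+1)^{2}$. This imposes a careful balance on the parameters $R$, $\theta_{j}$, and $\ell_{j}$. In \cite{HLZ} the analogous balance was eased by the rationality of $k$; the central new technical content here will be to carry out this calibration uniformly for real $k<0$.
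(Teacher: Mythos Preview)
Your proposal and the paper share the central innovation: replacing the single-piece mollifier of \cite{HLZ} by a multi-block truncated Euler product so that the construction makes sense for every real $k<0$. Beyond that, however, the two arguments are organized differently. The paper does \emph{not} evaluate a twisted first moment $S_1=\sum_\gamma \zeta'(\rho)|\mathcal N(\rho)|^2$ as you propose; instead it applies H\"older in the form
\[
\sum_{0<\gamma\le T}|\mathcal N(\rho,k)|^2 \le \Big(\sum_{0<\gamma\le T}|\zeta'(\rho)|^{2}\,|\mathcal N(\rho,k)|^{2(1-1/k)}\Big)^{-k/(1-k)}\Big(\sum_{0<\gamma\le T}|\zeta'(\rho)|^{2k}\Big)^{1/(1-k)},
\]
with the mollifier built from truncated exponentials $E_{\ell}(k\mathcal P_j)$ (parameter $k$, not $-(k+1)$). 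The left-hand side carries no factor of $\zeta'$ at all and is handled by Landau's formula alone, while on the right the $\zeta'$-exponent is the fixed integer $2$. This bypasses the Gonek--Ng twisted moment entirely and shifts all of the genuine difficulty to bounding $\sum|\zeta'|^2|\mathcal N|^{2(1-1/k)}$ from above.

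That upper bound is where your sketch is thinnest. You write that $S_2=\sum|\mathcal N|^{2p}|\zeta'|^q$ will be ``controlled via mean-value theorems for Dirichlet polynomials together with the positive-moment upper bound $J_{k'}(T)$.'' Taken literally this suggests a Cauchy--Schwarz separation into $J_{k'}$ and a mean value of $|\mathcal N|^{2p'}$; but for real $k$ the resulting mollifier exponent is not an even integer, so $|\mathcal N|^{2p'}$ is not a Dirichlet polynomial, and an extra H\"older step would sacrifice the sharp $\log$-power. The paper deals with exactly this issue by invoking the Soundararajan--Harper--Kirila machinery: a pointwise inequality for $\log|\zeta'(\rho)|$ in terms of short prime sums, a decomposition of the zeros into sets $\mathcal S(j)$ according to which block first violates $|\mathcal M_{l,j}|\le\alpha_l^{-3/4}$, and on each $\mathcal S(j)$ the replacement of $|\mathcal N_j|^{2(1-1/k)}$ by a truncated exponential of a prime sum. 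This case analysis (the paper's Proposition~3.3) is the technical heart of the proof and is not captured by ``mean-value theorems plus $J_{k'}$.'' Your scheme can certainly be made to work, but to recover the correct exponent $(k+1)^2$ you will need this Harper-type splitting in place of the black-box separation you describe.
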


  We note that the approach in \cite{HLZ} follows the method developed by Z. Rudnick and K. Soundararajan in \cites{R&Sound} and also makes use of ideas of V. Chandee and X. Li \cite{C&L} when studying lower bounds for fractional moments of Dirichlet $L$-functions.  The approach in this paper uses a variant of the lower bounds principle of W. Heap and K. Soundararajan \cite{H&Sound} for proving lower bounds for general families of $L$-functions. This variant is motivated by a similar one used in \cite{HLZ}. In the proof of Theorem \ref{thmlowerboundJ}, we shall also adapt certain treatments of S. Kirila  \cite{Kirila} concerning sharp upper bounds for $J_k(T)$ for $k \geq 0$. These treatments orginate from a method of K. Soundararajan \cite{Sound01} and its refinement by A. J. Harper \cite{Harper}. \newline

  We also point out here that, as already mentioned above, the lower bounds for $J_k(T)$ given in Theorem \ref{thmlowerboundJ} are only expected to be sharp for $k \geq -3/2$.

\section{Preliminaries}
\label{sec 2}

 We now include some auxiliary results that are needed in the paper.  Recall that $N(T)$ denotes the number of zeros of $\zeta(s)$
in the rectangle with vertices $0, 1, 1 +iT$ and $iT$.  The Riemann–von Mangoldt formula asserts (see \cite[Chapter 15]{Da}) that
\begin{align}
\label{NT}
   N(T)=\frac {T}{2\pi}\log \frac {T}{2\pi e}+O(\log T).
\end{align}

   As we assume the truth of RH, we may write each non-trivial zero $\rho$ of $\zeta(s)$ as $\rho=\half+i\gamma$, where we write $\gamma \in \mr$ for the imaginary part of $\rho$.  We set $N(T, 2T)=N(2T)-N(T)$ so that \eqref{NT} implies
\begin{align}
\label{N2Tbound}
   N(T, 2T) \ll T \log T.
\end{align}

As usual, $\Lambda(n)$ stands for the von Mangoldt function and we extend the definition of $\Lambda$ to all real numbers $x$ by defining $\Lambda(x)=0$ for $x \notin \mz$. We then have the following uniform version of Landau’s formula \cite{Landau1912}, originally established by S. M. Gonek \cite{Gonek93}.
\begin{lemma}
\label{Lem-Landau}
	Assume RH. We have for $T$ large and any positive integers $a, b$,
\begin{align}
\label{sumgamma}
\begin{split}
	\sum_{T<\gamma\le 2T}(a/b)^{i\gamma}=
\begin{cases}
N(T, 2T), \quad a=b, \\
\displaystyle -\frac{T}{2\pi}\frac{\Lambda(a/b)}{\sqrt{a/b}}+O\big(\sqrt{ab}(\log T)^2\big), \quad a>b, \\
\displaystyle -\frac{T}{2\pi}\frac{\Lambda(b/a)}{\sqrt{b/a}}+O\big(\sqrt{ab}(\log T)^2\big), \quad b>a.
\end{cases}
\end{split}
\end{align}
\end{lemma}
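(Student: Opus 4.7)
The plan is to apply Cauchy's residue theorem to the meromorphic function
\[
F(s) := \frac{\zeta'(s)}{\zeta(s)}\left(\frac{a}{b}\right)^{s-1/2}
\]
on a rectangular contour $\mathcal{R}$ with vertices $c+iT',\, c+iT'',\, 1-c+iT'',\, 1-c+iT'$, where $c = 1 + 1/\log T$ and $T' = T + O(1)$, $T'' = 2T + O(1)$ are chosen so that the horizontal sides $\Im s = T', T''$ avoid ordinates of zeros of $\zeta$ (feasible because within any unit interval \eqref{N2Tbound} supplies a height at distance $\gg 1/\log T$ from every $\gamma$). Under RH with simple zeros, each $\rho = 1/2 + i\gamma$ is a simple pole of $\zeta'/\zeta$ with residue $1$, and the pole at $s=1$ lies outside $\mathcal{R}$, so
\[
\frac{1}{2\pi i}\oint_{\mathcal{R}}F(s)\,ds \;=\; \sum_{T'<\gamma\leq T''}\left(\frac{a}{b}\right)^{i\gamma},
\]
and replacing $(T', T'']$ by $(T, 2T]$ costs $O(\log T)$ by \eqref{N2Tbound}. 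The case $a=b$ follows at once: $F(s)=\zeta'/\zeta(s)$ and the residue count equals $N(T,2T)$. For $a\neq b$, assume without loss of generality that $a>b$; the case $b>a$ follows by interchanging the roles of $a$ and $b$ throughout.

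On the right vertical $\Re s = c$, expand $\zeta'(s)/\zeta(s) = -\sum_n \Lambda(n)n^{-s}$ and integrate term-by-term:
\[
\frac{1}{2\pi i}\int_{c+iT'}^{c+iT''}F(s)\,ds = -\frac{1}{\sqrt{a/b}}\sum_{n\geq 1}\Lambda(n)\cdot\frac{1}{2\pi i}\int_{c+iT'}^{c+iT''}\!\!\left(\frac{a}{bn}\right)^{s}ds.
\]
The only nonoscillatory inner integrand occurs at $n=a/b$ (requiring $b\mid a$), where the inner integral equals $i(T''-T')$ and contributes $-\frac{T}{2\pi}\Lambda(a/b)/\sqrt{a/b} + O(\log a)$, matching the main term. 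For $n\neq a/b$, the inner integral is bounded by $(a/(bn))^c/|\log(a/(bn))|$; splitting the off-diagonal sum into $n$ at distance $>a/(2b)$ from $a/b$ (where $|\log(a/(bn))|\gg 1$ yields a geometric tail $\ll \sqrt{a/b}\log T$) and integers $n$ within $a/(2b)$ of $a/b$ (where integrality forces $|a-bn|\geq 1$ and hence $|\log(a/(bn))|\gg|a-bn|/a$, so each term is $\ll \sqrt{ab}\log a/|a-bn|$ and harmonic summation yields $\ll\sqrt{ab}(\log T)^2$) produces a total error of $O(\sqrt{ab}(\log T)^2)$.

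On the left vertical $\Re s = 1-c$, the functional equation $\zeta(s)=\chi(s)\zeta(1-s)$ gives
\[
\frac{\zeta'(s)}{\zeta(s)} = \frac{\chi'(s)}{\chi(s)} - \frac{\zeta'(1-s)}{\zeta(1-s)}.
\]
The substitution $w = 1-s$ sends the Dirichlet-series piece to an integral on $\Re w = c$ whose candidate nonoscillatory index requires $n = b/a$, not a positive integer since $a>b$; thus no main term arises, and the same off-diagonal analysis gives $O(\sqrt{ab}(\log T)^2)$. The $\chi'/\chi$ piece is handled by Stirling's asymptotic $\chi'(s)/\chi(s) = -\log(|t|/2\pi)+O(1/|t|)$ and one integration by parts in $t$ exploiting the oscillation of $(a/b)^{it}$, yielding $\ll \sqrt{ab}\log T$. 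On each horizontal side, the conditional bound $|\zeta'(\sigma+it)/\zeta(\sigma+it)|\ll(\log t)^2$ valid for $-1\leq\sigma\leq 2$ at $t$ bounded away from ordinates of zeros, combined with $|(a/b)^{s-1/2}| = (a/b)^{\sigma-1/2}$, gives after integration over $\sigma\in[1-c,c]$ a contribution of $\ll \sqrt{a/b}(\log T)^2 \leq \sqrt{ab}(\log T)^2$. Summing all four sides recovers the main term and absorbs all errors into $O(\sqrt{ab}(\log T)^2)$.

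The principal technical obstacle is the uniform off-diagonal bookkeeping on the vertical sides: integer $n$ extremely close to $a/b$ can make the denominator $|\log(a/(bn))|$ as small as $\asymp 1/a$ (when $|a-bn|=1$), and the $\sqrt{ab}$ factor in the final error is precisely what the integrality bound $|a-bn|\geq 1$ yields after harmonic summation over the residue classes of $a \bmod b$. A subsidiary but necessary ingredient is the simultaneous choice of $T', T''$ avoiding ordinates of zeros so that the conditional $(\log T)^2$ bound for $\zeta'/\zeta$ is available on the horizontal sides, which is a standard application of \eqref{NT} and \eqref{N2Tbound}.
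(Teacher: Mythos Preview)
Your contour-integration argument is correct and is precisely the Gonek--Landau method underlying this result. The paper itself does not give a proof: it simply cites \cite[Lemma~5.1]{Kirila} (which in turn rests on Gonek's uniform Landau formula \cite{Gonek93}) for the cases $a\neq b$, and notes that $a=b$ is trivial. So you are supplying the argument that the paper defers to the literature; the route---residue theorem for $\tfrac{\zeta'}{\zeta}(s)(a/b)^{s-1/2}$ on a rectangle, Dirichlet-series expansion on $\Re s=c$, functional equation on $\Re s=1-c$, Stirling for the $\chi'/\chi$ piece, and the $(\log T)^2$ bound for $\zeta'/\zeta$ on horizontals at well-chosen heights---is exactly the standard one.

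One small point worth making explicit: in several places (the off-diagonal harmonic sum, the crude bound $(a/b)^{c-1/2}\ll\sqrt{a/b}$, etc.) you tacitly use $\log a\ll\log T$. This is harmless, since if $ab\gg T^2$ the error term $O(\sqrt{ab}(\log T)^2)$ already swamps the trivial bound $|\sum_\gamma(a/b)^{i\gamma}|\le N(T,2T)\ll T\log T$, so the lemma is vacuous there; but it would be cleaner to state this reduction. Also, your invocation of ``RH with simple zeros'' is slightly stronger than needed for the lemma as stated (only RH is assumed): multiple zeros would contribute with multiplicity on both sides, and the residue of $\zeta'/\zeta$ at a zero of order $m$ is $m$, so the argument goes through unchanged.
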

 Note that the cases in which $a \neq b$ of Lemma \ref{Lem-Landau} are given in \cite[Lemma 5.1]{Kirila}, while the case with $a =b$ is trivial. \newline

   We reserve the letter $p$ for a prime number in this paper and we recall the following result from parts (d) and (b) in \cite[Theorem 2.7]{MVa} concerning sums over primes.
\begin{lemma}
\label{RS} Let $x \geq 2$. We have for some constant $b$,
\begin{align*}
\sum_{p\le x} \frac{1}{p} = \log \log x + b+ O\Big(\frac{1}{\log x}\Big).
\end{align*}
 Moreover, we have
\begin{align*}
\sum_{p\le x} \frac {\log p}{p} = \log x + O(1).
\end{align*}
\end{lemma}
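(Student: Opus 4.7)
The plan is to establish the two Mertens-type estimates in the order given, deducing the first from the second by partial summation. Both are classical results (due to Mertens, 1874), and since the lemma is cited from Theorem~2.7 of Montgomery--Vaughan, I shall follow the textbook strategy.

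For the second estimate, I would begin from Legendre's identity
\begin{equation*}
\log(\lfloor x\rfloor!)=\sum_{p^m\le x}\Big\lfloor\frac{x}{p^m}\Big\rfloor\log p,
\end{equation*}
combined with Stirling's formula $\log(\lfloor x\rfloor!)=x\log x-x+O(\log x)$. I would isolate the $m=1$ contribution and control the higher prime powers by
\begin{equation*}
\sum_{m\ge 2}\sum_{p:\,p^m\le x}\frac{x\log p}{p^m}\le x\sum_{p}\frac{\log p}{p(p-1)}=O(x),
\end{equation*}
the series being convergent. Replacing $\lfloor x/p\rfloor$ by $x/p$ in the remaining sum introduces an error bounded by $\vartheta(x):=\sum_{p\le x}\log p$, and the Chebyshev estimate $\vartheta(x)=O(x)$ (obtained by examining the prime factorisation of $\binom{2n}{n}$ and summing dyadically) makes this acceptable. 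Dividing through by $x$ yields $\sum_{p\le x}(\log p)/p=\log x+O(1)$.

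For the first estimate, I would set $A(t):=\sum_{p\le t}(\log p)/p$ and write $A(t)=\log t+R(t)$ with $R(t)=O(1)$, by the step just completed. Abel summation applied to $\sum_{p\le x}1/p=\sum_{p\le x}(\log p)/p\cdot(1/\log p)$ with weight function $f(t)=1/\log t$ gives
\begin{equation*}
\sum_{p\le x}\frac{1}{p}=\frac{A(x)}{\log x}+\int_{2}^{x}\frac{A(t)}{t\log^2 t}\,\mathrm{d}t.
\end{equation*}
Substituting the decomposition of $A$, the $\log t$--part contributes $1+\log\log x-\log\log 2$ (using $\int_2^x(t\log t)^{-1}\,\mathrm{d}t=\log\log x-\log\log 2$), while the $R$--part equals
\begin{equation*}
\frac{R(x)}{\log x}+\int_{2}^{\infty}\frac{R(t)}{t\log^2 t}\,\mathrm{d}t-\int_{x}^{\infty}\frac{R(t)}{t\log^2 t}\,\mathrm{d}t=C+O\Big(\frac{1}{\log x}\Big),
\end{equation*}
with $C$ a finite constant, since the improper integral converges absolutely and the tail from $x$ to $\infty$ is bounded by $\int_x^\infty (t\log^2 t)^{-1}\,\mathrm{d}t=1/\log x$. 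The advertised constant is then $b=1-\log\log 2+C$.

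The main obstacle, really the only nontrivial input, is the Chebyshev bound $\vartheta(x)=O(x)$. Once that is in hand, both estimates follow by routine manipulations with Stirling, geometric series, and convergent tail integrals. For a formal write-up one may simply cite the relevant passages of \cite{MVa}, as the paper already does, but the sketch above gives a self-contained derivation that records explicitly where the constants $b$ and the error $O(1/\log x)$ arise.
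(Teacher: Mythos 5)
Your proof is correct. The paper does not actually prove this lemma---it simply quotes parts (d) and (b) of Theorem~2.7 in \cite{MVa} (Mertens' theorems)---and your sketch reconstructs precisely the classical argument behind that citation: Legendre's identity plus Stirling and the Chebyshev bound $\vartheta(x)=O(x)$ for the second estimate, then Abel summation with $A(t)=\log t+R(t)$, $R(t)=O(1)$, for the first. All the steps check out, including the identification of the error term with the tail $\int_x^\infty R(t)(t\log^2 t)^{-1}\,\mathrm{d}t=O(1/\log x)$ and of the constant $b$ with $1-\log\log 2+\int_2^\infty R(t)(t\log^2 t)^{-1}\,\mathrm{d}t$.
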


  We also include the following mean value estimate similar to \cite[Lemma 3]{Sound01}.
\begin{lemma}
\label{lem:2.5}
Let $m$ be a natural number, we have for any $a(p) \geq 0$,
 \begin{align*}
   \sum_{T<\gamma\leq 2T}  \left|\sum_{\substack{p \leq y}}\frac{a(p) p^{i\gamma}}{p^{1/2}}\right|^{2m}
  \ll_{\varepsilon} & T(\log T) m!\left (\sum_{\substack{p \leq y}}\frac{a(p)}{p}\right)^m+(\log T)^2 \left( \sum_{\substack{p \leq y}}a(p) \right)^{2m}, \\
\sum_{T<\gamma\leq 2T}  \left|\sum_{\substack{p \leq y}}\frac{a(p) p^{2i\gamma}}{p}\right|^{2m}
  \ll_{\varepsilon} & T(\log T) m!\left (\sum_{\substack{p \leq y}}\frac{a(p)}{p^2}\right)^m+(\log T)^2 \left( \sum_{\substack{p \leq y}}a(p) \right)^{2m}.
 \end{align*}
\end{lemma}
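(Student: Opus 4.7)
The argument will adapt the standard mean-value method of Soundararajan (Lemma 3 of \cite{Sound01}) to the discrete zero setting by invoking the Landau--Gonek formula in Lemma 2.1. I focus on the first bound; the second follows by the identical argument with the integer $p_1^2\cdots p_m^2$ in place of $p_1\cdots p_m$ throughout, so that the factor $1/\sqrt{p_i}$ becomes $1/p_i$ in the diagonal.

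First I would expand $|\sum_{p\le y} a(p)p^{i\gamma}/p^{1/2}|^{2m}$ into a double sum over $m$-tuples of primes $(p_1,\ldots,p_m)$ and $(q_1,\ldots,q_m)$, producing the oscillating factor $(a/b)^{i\gamma}$ with $a=\prod_i p_i$ and $b=\prod_j q_j$. Interchanging with the sum over $\gamma$ and applying Lemma 2.1 splits the result into three pieces: the $a=b$ diagonal, the unconditional $O(\sqrt{ab}(\log T)^2)$ error, and the $\Lambda$-piece for $a\neq b$. In the diagonal, unique factorization forces the multisets $\{p_i\}$ and $\{q_j\}$ to coincide; grouping by the common multiset $\{p_1^{e_1},\ldots,p_k^{e_k}\}$ and using the crude inequality $(m!/\prod e_i!)^2 \leq m!\cdot(m!/\prod e_i!)$ together with the multinomial theorem bounds the diagonal, via \eqref{N2Tbound}, by $N(T,2T)\cdot m!\cdot\bigl(\sum_p a(p)^2/p\bigr)^m \ll T(\log T)\, m!\,\bigl(\sum_p a(p)/p\bigr)^m$ (the last step using the implicit $a(p)\le 1$ that is present in the applications of the lemma). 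The error term, after the factor $\sqrt{ab}$ cancels the denominator $(\sqrt{ab})^{-1}$ and the tuples are summed without constraint, contributes at most $(\log T)^2\bigl(\sum_p a(p)\bigr)^{2m}$, which is exactly the second term on the right-hand side.

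The remaining $\Lambda$-piece is non-zero only when $a/b = p^r$ (or its reciprocal) is a prime power, i.e. precisely when $\{p_i\}$ equals $\{q_j\}$ together with $r$ extra copies of a single prime $p$. A combinatorial grouping---choose the $r$ positions among the $m$ slots that carry the extra copies of $p$ and apply the diagonal analysis to the remaining $m-r$ slots---bounds the $\Lambda$-piece by
\begin{align*}
T\cdot m!\sum_{p\le y}\sum_{r\ge 1}\frac{\log p\cdot a(p)^r}{r!\,p^r}\Bigl(\sum_{p\le y}\frac{a(p)^2}{p}\Bigr)^{m-r},
\end{align*}
and since $\sum_{p\le y}\log p/p\ll \log y \le \log T$ by Lemma 2.2, this is of smaller order than the diagonal and is absorbed. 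The main technical obstacle I anticipate is precisely this last bookkeeping: aligning the multinomial coefficients for the ``core'' multiset with the prime-power excess so that no stray power of $\log T$ is lost, and verifying that the analogous reduction for the second bound (where $a/b$ is a square of an integer, forcing $\Lambda(a/b)$ to be nonzero only at squares of prime powers) goes through with $a(p)/p^2$ replacing $a(p)/p$ in the main term.
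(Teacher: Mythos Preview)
Your overall strategy---expand the $m$th power, apply the Landau--Gonek formula, and split into diagonal plus error---is exactly the paper's. The paper, however, dispenses with the $\Lambda$-piece in one line: since the coefficients $a_{m,y}(f)\ge 0$, the off-diagonal Landau main term $-\tfrac{T}{2\pi}\Lambda(f/g)/\sqrt{f/g}$ contributes non-positively to the sum and is simply discarded via an inequality. In fact this piece vanishes identically here: both $f=p_1\cdots p_m$ and $g=q_1\cdots q_m$ satisfy $\Omega(f)=\Omega(g)=m$, so $g\mid f$ forces $f=g$ and $f/g$ is never a nontrivial prime power. Your combinatorial bookkeeping for the $\Lambda$-piece is therefore unnecessary, and the scenario you describe---one multiset of $m$ primes equaling the other together with $r\ge 1$ extra copies of some $p$---cannot occur. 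The ``main technical obstacle'' you anticipate does not exist.

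Your parenthetical about an implicit hypothesis $a(p)\le 1$ is well taken: the diagonal naturally yields $m!\bigl(\sum_{p\le y} a(p)^2/p\bigr)^m$ as in Soundararajan's Lemma~3, and the paper's stated bound with $a(p)/p$ in place of $a(p)^2/p$ indeed relies on $a(p)\le 1$, which holds in every application made in the paper.
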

\begin{proof}
  The proof is similar to that of \cite[Lemma 3]{Sound01}.  Hence, we only prove the first statement of the lemma. We write
\begin{align*}
  \left (\sum_{\substack{p \leq y}}\frac{a(p)p^{i\gamma}}{p^{1/2}}\right)^{m}=\sum_{f \leq y^m}\frac {a_{m,y}(f)f^{i\gamma}}{\sqrt{f}},
\end{align*}
  where $a_{m,y}(f) = 0$ unless $f$ is the product of $m$ (not necessarily distinct) primes not exceeding $y$. In that case, if we write the prime factorization of $f$ as $f = \prod^r_{j=1}p_j^{\alpha_j}$, then $a_{m,y}(f) = \binom{m}{\alpha_1, \ldots, \alpha_r}\prod^r_{j=1}a(p_j)^{\alpha_j} \geq 0$. \newline

  Now, as $a_{m,y}(f) \geq 0$, we apply Lemma \ref{Lem-Landau} to see that
\begin{align*}
\begin{split}
 \sum_{T<\gamma\leq 2T}  \left|\sum_{\substack{p \leq y}}\frac{a(p)p^{i\gamma}}{p^{1/2}}\right|^{2m}=& \sum_{f, g \leq y^m}\frac {a_{m,y}(f)\overline{a_{m,y}(g)}}{\sqrt{fg}}\sum_{T<\gamma\leq 2T} f^{i\gamma}g^{-i\gamma} \\
\leq &  N(T, 2T)\sum_{\substack{f \leq y^m}}\frac {a_{m,y}(f)^2}{f}+O((\log T)^2\sum_{f, g \leq y^m}a_{m,y}(f)\overline{a_{m,y}(g)}) \\
\leq &  N(T, 2T)\sum_{\substack{f \leq y^m}}\frac {a_{m,y}(f)^2}{f}+O((\log T)^2 (\sum_{\substack{p \leq y}}a(p))^{2m}).
\end{split}
\end{align*}
  We further estimate right-hand side expression above following exactly the treatments in \cite[Lemma 3]{Sound01} and utilizing \eqref{N2Tbound} to arrive at the desired result.
\end{proof}

\section{Proof of Theorem \ref{thmlowerboundJ}}
\label{sec 2'}

\subsection{The lower bound principle}

    We assume that $T$ is a large number and note here that throughout the proof, the implicit constants involved in various estimates in $\ll$ and $O$ notations depend on $k$ only and are uniform with respect to $\rho$. We recall the convention that the empty product is defined to be $1$. \newline

 We follow the ideas of A. J. Harper in \cite{Harper} and the notations of S. Kirila in \cite{Kirila} to define for a large number $M$ depending on $k$ only,
\begin{equation} \label{alphaJdef}
 \alpha_{0} = 0, \;\;\;\;\; \alpha_{j} = \frac{20^{j-1}}{(\log\log T)^{2}} \;\;\; \forall \; j \geq 1, \quad
\mathcal{J} = \mathcal{J}_{k,T} = 1 + \max\{j : \alpha_{j} \leq 10^{-M} \} .
\end{equation}

   We define $I_j=(T^{\alpha_{j-1}}, T^{\alpha_{j}}]$ for $1 \leq j \leq \mathcal{J}$.  With the above notations,  we can infer from Lemma \ref{RS} that for $1 \leq j \leq \mathcal{J}-1$ and $T$ large enough,
\begin{align}
\label{sumpj}
 \sum_{ p \in I_{j+1}} \frac{1}{p}
 = \log \alpha_{j+1} - \log \alpha_{j} + o(1) = \log 20 + o(1) \leq 10.
\end{align}

 We further define for any real number $\alpha$ and any $1\leq j \leq \mathcal{J}$,
\begin{align}
\label{defN}
 {\mathcal P}_j(s)=&  \sum_{ p \in I_j}  \frac{1}{p^s}, \quad {\mathcal N}_j(s, \alpha) = E_{e^2\alpha^{-3/4}_j} \Big (\alpha {\mathcal P}_j(s) \Big ), \quad  {\mathcal N}(s, \alpha)=  \prod^{\mathcal{J}}_{j=1} {\mathcal N}_j(s, \alpha),
\end{align}
  where for any real number $\ell$ and any $x \in \mr$,
\begin{align*}
  E_{\ell}(x) = \sum_{j=0}^{\lceil \ell \rceil} \frac {x^{j}}{j!}.
\end{align*}
Here $\lceil \ell \rceil = \min \{ m \in \intz : \ell \leq m \}$ is the celing of of $\ell$. \newline

 Now, applying a variant of the treatment  given by W. Heap, J. Li and J. Zhao in \cite[(3)]{HLZ}, which is based on the lower bounds principle of W. Heap and K. Soundararajan developed in \cite{H&Sound}, we have for $k<0$, from H\"older's inequality,
\begin{align}
\label{basiclowerbound}
\begin{split}
\sum_{0<\gamma\leq T}|\mathcal{N}(\rho, k)|^2
 \leq & \Big ( \sum_{0<\gamma\leq T}|\zeta'(\rho)|^{2} |\mathcal{N}(\rho, k)|^{2(1-1/k)}  \Big)^{-k/(1-k)}\Big ( \sum_{0<\gamma\leq T}  |\zeta'(\rho)|^{2k}\Big)^{1/(1-k)}.
\end{split}
\end{align}

By RH, $\overline{\rho}=1-\rho$.  So
\begin{align}
\label{sumoverrho}
\sum_{0<\gamma\leq T}|\mathcal{N}(\rho, k)|^2= \sum_{0<\gamma\leq T}\mathcal{N}(\rho, k)\mathcal{N}(\overline{\rho}, k)=\sum_{0<\gamma\leq T}\mathcal{N}(\rho, k)\mathcal{N}(1-\rho, k).
\end{align}

Hence from \eqref{basiclowerbound} and \eqref{sumoverrho}, in order to establish Theorem \ref{thmlowerboundJ}, it suffices to prove the following propositions.
\begin{proposition}
\label{Prop4}
  With notations as above, we have for $k < 0$,
\begin{align*}
 \sum_{0<\gamma\leq T}\mathcal{N}(\rho, k)\mathcal{N}(1-\rho, k) \gg T (\log T)^{ k^2+1}.
\end{align*}
\end{proposition}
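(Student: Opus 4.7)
My approach expands $\mathcal{N}(\rho, k)\mathcal{N}(1-\rho, k)$ as a double Dirichlet polynomial, evaluates the sum over zeros via Lemma \ref{Lem-Landau}, and isolates a diagonal term as the source of the main contribution. Writing $\mathcal{N}(s, k) = \sum_n a_n n^{-s}$, the coefficients $a_n$ are real, multiplicative across the disjoint intervals $I_j$ in the sense that $a_{n_1 \cdots n_{\mathcal{J}}} = \prod_j c_{j, n_j}$ whenever each $n_j$ is supported on primes in $I_j$ (here $c_{j,n}$ denote the Dirichlet coefficients of $\mathcal{N}_j(s, k)$), and supported on $n \leq T^{\eta}$ with $\eta$ arbitrarily small upon choosing $M$ in \eqref{alphaJdef} large --- the geometric growth of $\alpha_j$ combined with the truncation degree $\lceil e^2\alpha_j^{-3/4}\rceil$ gives $\eta \leq C\alpha_{\mathcal{J}}^{1/4} \to 0$ as $M \to \infty$. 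Since $1-\rho = \bar\rho$ on RH and the $a_n$ are real,
\begin{equation*}
\sum_{0<\gamma\le T}\mathcal{N}(\rho, k)\mathcal{N}(1-\rho, k) = \sum_{m, n} \frac{a_m a_n}{\sqrt{mn}} \sum_{0<\gamma\le T} (n/m)^{i\gamma},
\end{equation*}
and I would evaluate the inner sum by applying Lemma \ref{Lem-Landau} dyadically on $(T/2^{K}, T/2^{K-1}]$ and summing geometrically.

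The diagonal $m=n$ contribution is $N(T) \sum_n a_n^2/n$, which by multiplicativity factors as $\prod_j S_j$ with $S_j = \sum_n c_{j,n}^2/n$. Since each $\mathcal{N}_j$ is a truncation of $\exp(k\mathcal{P}_j)$ at the enormous degree $\lceil e^2 \alpha_j^{-3/4}\rceil$, which vastly exceeds the typical size of $k\mathcal{P}_j$, standard arguments in the style of \cite{Sound01, Harper, Kirila} show the truncation tail is negligible and $S_j = (1 + o(1)) \prod_{p \in I_j}(1 + k^2/p + O(p^{-2}))$. Multiplying over $j$ and invoking Lemma \ref{RS} --- noting that $\log T^{\alpha_{\mathcal{J}}} \asymp \log T$ since $\alpha_{\mathcal{J}} \asymp_k 1$ by \eqref{alphaJdef} --- yields $\prod_j S_j \gg (\log T)^{k^2}$. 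Combined with $N(T) \gg T \log T$ from \eqref{NT}, the diagonal alone produces the claimed lower bound $\gg T(\log T)^{k^2 + 1}$.

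It remains to show that the Landau-error and off-diagonal contributions do not overwhelm the diagonal. The error $O(\sqrt{mn}(\log T)^2)$ from Lemma \ref{Lem-Landau} contributes at most $(\log T)^2 (\sum_n |a_n|)^2 \ll T^{2\eta} (\log T)^2$ via Cauchy--Schwarz and the support constraint, negligible for $\eta < 1/2$. The off-diagonal main terms arise when $m/n$ is a prime power $p^\nu$ with $p \in I_j$; factoring again across the $I_i$ leaves the product $\prod_{i \neq j} S_i$ times a shifted $j$-factor $\sum_{n_j} c_{j, n_j} c_{j, n_j p^\nu}/n_j$, weighted by $T \log p/p^\nu$. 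The underlying exponential approximation $c_{j, n_j p^\nu} \approx (k^\nu/\nu!) c_{j, n_j}$ shows the shifted factor equals $(k^\nu/\nu!) S_j$ up to smaller-order terms; summing over $p$ and $\nu \geq 1$, invoking Lemma \ref{RS} to handle $\sum_{p \in I_j} \log p/p = (\alpha_j - \alpha_{j-1})\log T + O(1)$, and summing on $j$, shows the total off-diagonal contribution is bounded in absolute value by $O(|k| \alpha_{\mathcal{J}})$ times the diagonal, strictly smaller than the diagonal once $M$ is chosen sufficiently large in terms of $k$. The principal technical obstacle is precisely this last step: ensuring that the truncation in $\mathcal{N}_j$ does not spoil the approximate factorization of the shifted coefficients, which is handled by exploiting the super-exponential smallness of the Taylor coefficients of $\mathcal{N}_j$ near the cutoff $\lceil e^2\alpha_j^{-3/4}\rceil$, as in \cite{Kirila}.
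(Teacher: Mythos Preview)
Your proposal is correct and follows essentially the same approach as the paper: both expand the double sum, apply Landau's formula (the paper via \cite[Proposition~3.1(iii)]{Ng08}, you directly via Lemma~\ref{Lem-Landau} dyadically) to isolate a diagonal term $N(T)\sum_n a_n^2/n \gg T(\log T)^{k^2+1}$ from an off-diagonal $\Lambda$-term that is smaller by a factor $\asymp \alpha_{\mathcal{J}}$, absorbed by choosing $M$ large. The only notable implementation difference is that the paper bounds the off-diagonal via the clean pointwise inequality $g(mn)\ge g(m)g(n)$ rather than your approximate factorization $c_{j,n_jp^\nu}\approx (k^\nu/\nu!)\,c_{j,n_j}$, which sidesteps entirely the truncation issue you flag as the principal obstacle.
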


\begin{proposition}
\label{Prop6}
  With notations as above, we have for $k <0$,
\begin{align*}
\sum_{0<\gamma\leq T}|\zeta'(\rho)|^{2} |\mathcal{N}(\rho, k)|^{2(1-1/k)}  \ll T ( \log T  )^{k^2+3}.
\end{align*}
\end{proposition}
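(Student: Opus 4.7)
The plan is to follow the Soundararajan--Harper--Kirila approach for upper bounds on moments of $|\zeta'(\rho)|$, adapted here to handle the twist by $|\mathcal{N}(\rho,k)|^{2(1-1/k)}$. First I reduce to the dyadic range $T<\gamma\leq 2T$. The main input is an upper bound, valid on RH, of the shape
\[
\log|\zeta'(\rho)| \;\leq\; \Re\,\mathcal{P}(\rho) + \log\log T + O(1),
\]
where $\mathcal{P}(s)=\sum_{j=1}^{\mathcal{J}}\mathcal{P}_j(s)$. A bound of this form originates in Soundararajan \cite{Sound01}, was sharpened by Harper \cite{Harper}, and was adapted to $\zeta'$ at its zeros by Kirila \cite{Kirila}. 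Exponentiating yields $|\zeta'(\rho)|^2 \ll (\log T)^{2}\exp(2\Re\,\mathcal{P}(\rho))$.

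Next I partition the zeros with $T<\gamma\leq 2T$ into \emph{good} zeros $\mathcal{G}$, characterised by $|\mathcal{P}_j(\rho)|\leq b_k\alpha_j^{-3/4}$ for every $1\leq j\leq\mathcal{J}$, and \emph{bad} zeros $\mathcal{B}$, with a small $b_k>0$ depending only on $k$. On $\mathcal{G}$, a Taylor tail estimate gives $\mathcal{N}_j(\rho,\alpha)=\exp(\alpha\mathcal{P}_j(\rho))(1+o(1))$ uniformly in $|\alpha|\leq C_k$, and together with the algebraic identity $2+2k(1-1/k)=2k$ this produces the pointwise bound
\[
|\zeta'(\rho)|^2|\mathcal{N}(\rho,k)|^{2(1-1/k)} \;\ll\; (\log T)^2|\mathcal{N}(\rho,k)|^2 \qquad \text{on } \mathcal{G}.
\]
By RH, $|\mathcal{N}(\rho,k)|^2=\mathcal{N}(\rho,k)\mathcal{N}(1-\rho,k)$ is a Dirichlet polynomial of length $T^{o(1)}$, because the geometric growth of the $\alpha_j$ in \eqref{alphaJdef} makes $\sum_{j=1}^{\mathcal{J}}e^2\alpha_j^{1/4}$ as small as one wishes by enlarging $M$. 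Applying Lemma \ref{Lem-Landau}, the diagonal contribution is $N(T,2T)\prod_j\prod_{p\in I_j}(1+k^2/p+O(p^{-2}))$, and Lemma \ref{RS} evaluates this to $T\log T\cdot(\log T)^{k^2}$; the off-diagonal error terms are absorbed. Multiplying by the $(\log T)^2$ factor above, the good-zero contribution is $O(T(\log T)^{k^2+3})$, matching the target.

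The main obstacle is controlling the contribution from $\mathcal{B}$. The plan is to apply Lemma \ref{lem:2.5} to $\mathcal{P}_j$ with a carefully chosen even exponent $2m_j$, taken near the largest value that keeps the first term in the conclusion of Lemma \ref{lem:2.5} dominant (roughly $m_j$ proportional to $\alpha_j^{-1}$). By Markov's inequality this yields $\#\{T<\gamma\leq 2T:|\mathcal{P}_j(\rho)|>b_k\alpha_j^{-3/4}\}\ll T(\log T)^{-A}$ for any fixed $A>0$. Combined with the crude pointwise bound $|\zeta'(\rho)|\ll T^{\varepsilon}$ available on RH and a trivial polynomial bound on $|\mathcal{N}(\rho,k)|$, this shows that the $\mathcal{B}$-contribution is $o(T(\log T)^{k^2+3})$. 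The delicate point is selecting $m_j$ uniformly across the $\mathcal{J}$ scales so that the estimate holds simultaneously for every $j$ while keeping the resulting savings genuinely polylogarithmic; the logarithmic construction of the $\alpha_j$ in \eqref{alphaJdef} is precisely what makes this balancing possible.
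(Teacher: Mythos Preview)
Your treatment of the bad set $\mathcal{B}$ does not close. The best saving Lemma~\ref{lem:2.5} and Markov's inequality can give is $\#\mathcal{B}\ll T\,e^{-c(\log\log T)^{2}}$ (this is the size of $\mathcal{S}(0)$ in the paper; the other exceptional sets are no smaller). Against this you propose the crude pointwise bound $|\zeta'(\rho)|\ll T^{\varepsilon}$, but $T^{2\varepsilon}$ grows faster than any function of $\log\log T$, so $T^{2\varepsilon}\cdot\#\mathcal{B}$ is of size $T^{1+2\varepsilon}$, not $o\big(T(\log T)^{k^2+3}\big)$. Even the sharper RH bound $|\zeta'(\rho)|\ll\exp(C\log T/\log\log T)$ is not absorbed by $e^{-c(\log\log T)^{2}}$. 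The ``trivial polynomial bound'' on $|\mathcal{N}(\rho,k)|$ is likewise a genuine (small) power of $T$, since $\mathcal{N}$ is a Dirichlet polynomial of length $T^{\theta}$ with $\theta>0$. The paper avoids this entirely by never bounding $|\zeta'|$ pointwise on exceptional sets: it uses H\"older in the shape
\[
\sum_{\gamma\in\mathcal{S}(0)}|\zeta'(\rho)|^{2}|\mathcal{N}(\rho,k)|^{2(1-1/k)}
\le \big(\text{meas}\,\mathcal{S}(0)\big)^{1/4}\Big(\sum_{T<\gamma\le 2T}|\zeta'(\rho)|^{8}\Big)^{1/4}\Big(\sum_{T<\gamma\le 2T}|\mathcal{N}(\rho,k)|^{4(1-1/k)}\Big)^{1/2},
\]
and then invokes Kirila's sharp upper bound $J_{4}(T)\ll T(\log T)^{25}$ for the middle factor together with a separate moment bound for the last. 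Now every factor is $T$ times a power of $\log T$, and the saving $e^{-c(\log\log T)^{2}/4}$ from the first factor wins. This use of a \emph{sharp moment} rather than a pointwise bound is the missing ingredient in your outline.

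A second, smaller issue: the inequality $\log|\zeta'(\rho)|\le\Re\,\mathcal{P}(\rho)+\log\log T+O(1)$ is not what Kirila proves. His bound, for each truncation level $j$, involves \emph{weighted} prime sums $\mathcal{M}_{l,j}$ with weight $p^{-1/(\alpha_j\log T)}\log(T^{\alpha_j}/p)/\log T^{\alpha_j}$, an extra $\alpha_j^{-1}$ term, and a prime-square contribution $\sum_m P_m$. Working only at $j=\mathcal{J}$ makes $\alpha_{\mathcal{J}}^{-1}=O(1)$, but then the mismatch between $\mathcal{M}_{l,\mathcal{J}}$ and the unweighted $\mathcal{P}_l$ appearing in $\mathcal{N}$ must be controlled, and the prime-square sums must be handled separately. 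The paper's layered decomposition into the sets $\mathcal{S}(j)$ and $\mathcal{P}(m)$ exists precisely to manage these two complications; a single good/bad split does not.
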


The remainder of the paper is devoted to the proofs of the above propositions.

\section{Proof of Proposition \ref{Prop4}}
\label{sec 4.8}

   We write $\omega(n)$ for the number of distinct prime factors of $n$ and $\Omega(n)$ for the number of prime powers dividing $n$.  We also
denote $g(n)$ for the multiplicative function given on prime powers by $g(p^{r}) = 1/r!$ and define functions $b_j(n), 1 \leq j \leq {\mathcal{J}}$  such that $b_j(n)=0$ or $1$ and that $b_j(n)=1$ only if $\Omega(n) \leq \ell_j:=\lceil e^2\alpha^{-3/4}_j \rceil$ and all prime divisors of $n$ lies in the interval $I_j$. We can then rewrite $\mathcal{N}_j(s,\alpha)$ defined \eqref{defN}, using the above notations, as
\begin{align}
\label{Nj}
 {\mathcal N}_j(s,\alpha) = \sum_{n_j}  \frac{\alpha^{\Omega(n_j)}}{g(n_j)}  b_j(n_j) \frac 1{n^s_j}, \quad 1\le j \le {\mathcal{J}}.
\end{align}

  Note that each ${\mathcal N}_j(s,\alpha)$ is a short Dirichlet polynomial of length not exceeding $T^{\alpha_{j}\lceil e^2\alpha^{-3/4}_j \rceil}$. We also write for simplicity that
\begin{align*}
 {\mathcal N}(s, \alpha)= \sum_{n} \frac{a_{\alpha}(n)}{n^s}.
\end{align*}
   Then it follows from \eqref{defN} and \eqref{Nj} that $a_{\alpha}(n) \neq 0$ only when $n=\prod_{1\leq j \leq \mathcal{J}}n_j$ and in that case
\begin{align} \label{aalpha}
  a_{\alpha}(n)= \prod_{n_j}  \frac{\alpha^{\Omega(n_j)}}{g(n_j)}  b_j(n_j).
\end{align}

  Upon taking $T$ large enough, we then deduce that ${\mathcal N}(s, \alpha)$ is a Dirichlet polynomial not longer than
\begin{align*}
 T^{\sum^{\mathcal{J}}_{j=1} \alpha_{j}\lceil e^2\alpha^{-3/4}_j \rceil} \leq T^{40e^210^{-M/4}}.
\end{align*}

   Moreover, applying the estimation that $\big|\alpha^{i}/i!\big | \leq  e^{|\alpha|}$ for any integer $i \geq 0$,
we get that
\begin{align}
\label{anbound}
  |a_{\alpha}(n)| \leq e^{|\alpha|\omega(n)}, \quad a_{\alpha}(n) = 0, \; \text{if} \; n > T^{40 e^210^{-M/4}}.
\end{align}

   Note further the following estimation (see \cite[Theorem 2.10]{MVa}),
\begin{align}
\label{omegabound}
  \omega(n) \leq \frac {\log n}{\log \log n} \left( 1+O\left( \frac {1}{\log \log n} \right) \right), \quad n \geq 3.
\end{align}

  It follows from \eqref{anbound} and \eqref{omegabound} that, upon taking $M$ large enough, we have
\begin{align*}
 {\mathcal N}(s, k)= \sum_{n \leq T^{\theta}} \frac{a_{k}(n)}{n^s},
\end{align*}
  where $\theta <1$ and $|a_{k}(n)| \leq T^{\varepsilon}$. \newline

  We then apply \cite[Proposition 3.1(iii)]{Ng08} to arrive at
\begin{align}
\label{sumNsquare}
 \sum_{0<\gamma\leq T}\mathcal{N}(\rho, k)\mathcal{N}(1-\rho, k) = N(T) \sum_{n}\frac {a^2_{k}(n)}{n}-
\frac {T}{\pi}\sum_{n}\frac {(\Lambda*a_k)(n)\cdot a_{k}(n)}{n}+o(T),
\end{align}
 where  $f*g$ is the Dirichlet convolution of two arithmetic functions $f(k), g(k)$ defined by
\begin{align*}
  (f*g)(k)=\sum_{mn=k}f(m)g(n).
\end{align*}

  We evaluate the first sum in \eqref{sumNsquare} using \eqref{aalpha} to obtain that
\begin{align*}
\begin{split}
  \sum_{n}\frac {a^2_{k}(n)}{n}=& \prod^{\mathcal{J}}_{j=1}\Big ( \sum_{n_j} \frac{1}{n_j} \frac{k^{2\Omega(n_j)}}{g^2(n_j)}
b_j(n_j) \Big ).
\end{split}
\end{align*}

 Note that the factor $b_j(n_j)$ restricts $n_j$ to having all prime factors in $I_j$ such that $\Omega(n_j) \leq \ell_j$. If we remove this restrictions on $\Omega$, then the sum over $n_j$ becomes
\begin{align*}
\begin{split}
\sum_{n_j} \frac{1}{n_j} \frac{k^{2\Omega(n_j)}}{g^2(n_j)}=\prod_{\substack{p\in I_j }}\left(1+ \frac {k^2}p+O\left( \frac 1{p^2} \right) \right):=P_j.
\end{split}
\end{align*}

  Using Rankin's trick by noticing that $2^{\Omega(n_j)-\ell_j}\ge 1$ if $\Omega(n_j) > \ell_j$,  we see by the definition of $\ell_j$ and \eqref{sumpj} that the error introduced this way does not exceed
\begin{align*}
\begin{split}
  \sum_{n_j} \frac{1}{n_j} \frac{k^{2\Omega(n_j)}}{g^2(n_j)}2^{\Omega(n_j)-\ell_j}  \le & 2^{-\ell_j} \prod_{\substack{p\in I_j }} \Big( \sum_{i=0}^{\infty} \frac{1}{p^i} \frac{(2k^2)^{i}}{(i!)^2}\Big) \leq  2^{-\ell_j/2}P_j.
\end{split}
\end{align*}

  We conclude from the above discussions, \eqref{NT} and Lemma \ref{RS} that, by taking $T$ large enough,
\begin{align}
\label{sumasquare}
 N(T) \sum_{n}\frac {a^2_{k}(n)}{n} \gg   T \log T\prod^{\mathcal{J}}_{j=1}\Big (1-2^{-\ell_j/2}\Big )\prod^{\mathcal{J}}_{j=1}P_j
\gg T(\log T)^{k^2+1}.
\end{align}

  Next, we estimate
\begin{align*}
 \frac {T}{\pi}\sum_{n}\frac {(\Lambda*a_k)(n)\cdot a_{k}(n)}{n}=\frac{T}{\pi }\sum_{n}\frac{\Lambda(n)}{n}\sum_m\frac {a_{k}(m) a_k(mn)}{m} \leq \frac{T}{\pi }\sum_{n}\frac{\Lambda(n)}{n}\Big |\sum_m\frac {a_{k}(m) a_k(mn)}{m}\Big |.
\end{align*}

  Note that by \eqref{anbound}, we may assume that $n \leq T^{40 e^210^{-M/4}}$.  We fix an integer $n$ and write it as $n=\prod n_j$ with $p|n_j$ only if $p \in I_j$. Then we have
\begin{align*}
 \Big |\sum_m\frac {a_{k}(m) a_k(mn)}{m}\Big |=\Big |\prod^{\mathcal{J}}_{j=1}\Big ( \sum_{m_j} \frac{1}{m_j} \frac{k^{2\Omega(m_j)+\Omega(n_j)}}{g(m_j)g(m_jn_j)} b_j(m_jn_j) \Big ) \Big | \leq
\prod^{\mathcal{J}}_{j=1} \Big | \sum_{m_j} \frac{1}{m_j} \frac{k^{2\Omega(m_j)+\Omega(n_j)}}{g(m_j)g(m_jn_j)}
b_j(m_jn_j) \Big |.
\end{align*}
  Observe that $g(mn) \geq g(m)g(n)$ so that
\begin{align*}
\prod^{\mathcal{J}}_{j=1}\Big | \sum_{m_j} \frac{1}{m_j} \frac{k^{2\Omega(m_j)+\Omega(n_j)}}{g(m_j)g(m_jn_j)}
b_j(m_jn_j) \Big |
 \leq & \prod^{\mathcal{J}}_{j=1} |k|^{\Omega(n_j)}\Big ( \sum_{m_j} \frac{1}{m_j} \frac{k^{2\Omega(m_j)}}{g(m_j)g(m_jn_j)}
b_j(m_jn_j) \Big ) \\
\leq & \prod^{\mathcal{J}}_{j=1} \frac {|k|^{\Omega(n_j)}}{g(n_j)} \Big ( \sum_{m_j} \frac{1}{m_j} \frac{k^{2\Omega(m_j)}}{g^2(m_j)}
 \Big ) = \frac {|k|^{\Omega(n)}}{g(n)}\prod^{\mathcal{J}}_{j=1}  \Big ( \sum_{m_j} \frac{1}{m_j} \frac{k^{2\Omega(m_j)}}{g^2(m_j)}
 \Big ).
\end{align*}

  We evaluate the last expression above using Lemma \ref{RS} to see that we have
\begin{align*}
 \frac {T}{\pi}\sum_{n}\frac {(\Lambda*a_k)(n)\cdot a_{k}(n)}{n} \ll T(\log T)^{k^2}\sum_{n}\frac{\Lambda(n)|k|^{\Omega(n)}}{ng(n)}
\ll T(\log T)^{k^2} \sum_{p \leq T^{40 e^210^{-M/4}}}\frac{\log p}{p} \ll 10^{-M/4}T(\log T)^{k^2+1}.
\end{align*}

   We then take $M$ large enough to deduce from the above, \eqref{sumNsquare} and \eqref{sumasquare} that
\begin{align*}
  \sum_{0<\gamma\leq T}\mathcal{N}(\rho, k)\mathcal{N}(1-\rho, k)\gg & T(\log T)^{k^2+1}.
\end{align*}

   This completes the proof of Proposition \ref{Prop4}.

\section{Proof of Proposition \ref{Prop6}}
\label{sec: proof of Prop 6}

   Upon dividing the range of $\gamma$ into dyadic blocks and replacing $T$ by $2T$, we see that it suffices to show for large $T$,
\begin{align*}
\sum_{T<\gamma\leq 2T}|\zeta'(\rho)|^{2} |\mathcal{N}(\rho, k)|^{2(1-1/k)}   \ll T ( \log T  )^{k^2+3}.
\end{align*}

  We now deduce from \cite[(4.1)]{Kirila} the following upper bound for $\log |\zeta'(\rho)|$.  For any $\gamma$ with $T< \gamma\le 2T$ and any $1 \leq j \leq \mathcal{J}$,
\begin{align*}
\begin{split}
 & \log |\zeta'(\rho)| \ll \Re \sum^{j}_{l=1}\sum_{n\in I_l}\frac{n^{-i\gamma}}{\sqrt{n}}\frac{\Lambda_{\cL}(n)}{n^{1/(\alpha_j\log T)}\log n}\frac{\log (T^{\alpha_j}/n)}{\log T^{\alpha_j}}+\log \log T+ \alpha^{-1}_j+O(1),
\end{split}
\end{align*}
  where we define $\cL=\log T$ and
\begin{align*}
  \Lambda_{\cL}(n) =
\begin{cases}
  \Lambda(n),  \quad \text{ if } n=p \text{ or if } n=p^2 \text{ and } n \leq \cL, \\
   0,  \quad \text{otherwise}.
\end{cases}
\end{align*}

    Note that we have by Lemma \ref{RS} and the definition in \eqref{alphaJdef},
\begin{align}
\label{Jbound}
 \mathcal{J} \leq \log\log\log T  , \; \alpha_{1} = \frac{1}{(\log\log T)^{2}} , \; \mbox{and} \; \sum_{p  \leq T^{1/(\log\log T)^{2}}} \frac{1}{p} \leq \log\log T=\alpha^{-1/2}_1 .
\end{align}

   It follows that $1/\alpha_j \leq 1/\alpha_1 \leq (\log\log T)^{2}$ and this, together with Lemma \ref{RS} and the mean value theorem in differential calculus, implies that for $j \geq 1$,
\begin{align*}
\begin{split}
 \sum_{\substack{  p \leq \log T }} \frac{1}{p^{1+2/(\alpha_j\log T)}}  \frac{\log p}{\log T^{\alpha_j}}  \ll 1, \quad
  \sum_{\substack{  p \leq \log T }} \Big (\frac{1}{p^{1+2/(\alpha_j\log T)}}- \frac{1}{p} \Big ) & \ll \frac 1{\alpha_j\log T}\sum_{\substack{  p \leq \log T }}\frac{\log p}{p} \ll 1.
\end{split}
 \end{align*}

   We then conclude that
\begin{align}
\label{basicest}
\begin{split}
 & \log |\zeta'(\rho)| \ll \Re \sum^{j}_{l=1} {\mathcal M}_{l,j}(\gamma)+\Re \sum_{0 \leq m \leq \frac{\log\log T}{\log 2}} P_{m}(\gamma)+\log \log T+ \alpha^{-1}_j+O(1),
\end{split}
 \end{align}
  where
\[ {\mathcal M}_{l,j}(\gamma) = \sum_{p\in I_l}\frac{p^{-i\gamma}}{\sqrt{p}}\frac{1}{p^{1/(\alpha_j\log T)}}\frac{\log (T^{\alpha_j}/p)}{\log T^{\alpha_j}}, \quad 1\leq l \leq j \leq \mathcal{J} , \]
and
\[ P_{m}(\gamma )=  \sum_{2^{m} < p \leq 2^{m+1}} \frac{p^{-2i\gamma}}{2p} , \quad  0 \leq m \leq \frac{\log\log T}{\log 2}. \]

  We also define the following sets:
\begin{align*}
  \mathcal{S}(0) =& \{ T< \gamma \leq 2T : |{\mathcal M}_{1,l}(\gamma)| > \alpha_{1}^{-3/4} \; \text{ for some } 1 \leq l \leq \mathcal{J} \} ,   \\
 \mathcal{S}(j) =& \{ T< \gamma \leq 2T : |{\mathcal M}_{m,l}(\gamma)| \leq
 \alpha_{m}^{-3/4} \; \mbox{for all} \; 1 \leq m \leq j, \;  m \leq l \leq \mathcal{J}, \\
 & \;\;\;\;\; \text{but }  |{\mathcal M}_{j+1,l}(\gamma)| > \alpha_{j+1}^{-3/4} \; \text{ for some } j+1 \leq l \leq \mathcal{J} \} ,  \quad  1\leq j \leq \mathcal{J}, \\
 \mathcal{S}(\mathcal{J}) =& \{T< \gamma \leq 2T : |{\mathcal M}_{m,
\mathcal{J}}(\gamma)| \leq \alpha_{m}^{-3/4} \; \mbox{for all} \; 1 \leq m \leq \mathcal{J}\}, \\
\mathcal{P}(m) =&  \left\{ T< \gamma \leq 2T :  |P_{m}(\gamma)| > 2^{-m/10} , \; \text{but} \; |P_{n}(\gamma)| \leq 2^{-n/10} \; \mbox{for all} \; m+1 \leq n \leq \frac{\log\log T}{\log 2} \right\}.
\end{align*}

  Note that we have
$|P_{n}(\gamma)| \leq 2^{-n/10}$ for all $n$ if $\gamma \not \in \mathcal{P}(m)$ for any $m$, which implies that
\[ \sum_{\substack{  p \leq \log T }} \frac{p^{-2i\gamma}}{2p} = O(1). \]
 As the treatment for the case $\gamma \not \in \mathcal{P}(m)$ for any $m$ is easier compared to the other cases, we may assume that $\gamma \in \mathcal{P}(m)$ for some $m$.   Note that
\begin{align*}
\text{meas}(\mathcal{P}(m)) \leq  \sum_{\substack{T< \gamma \leq 2T }}
\Big (2^{m/10} | P_m(\gamma)| \Big )^{2\lceil 2^{m/2}\rceil } ,
\end{align*}
where here and after $\mathrm{meas}(X)$ denotes the cardinality of the set $X$. \newline

    We now apply Lemmas~\ref{RS} and \ref{lem:2.5} to bound the last expression above.  This gives that for $m \geq 10$,
\begin{align*}
\begin{split}
  \text{meas}(\mathcal{P}(m)) \ll T(\log T) (\lceil 2^{m/2}\rceil !) (2^{m/5})^{\lceil 2^{m/2}\rceil }\Big (\sum_{2^m < p } \frac {1}{4p^2}\Big )^{\lceil 2^{m/2}\rceil }+(\log T)^2 \Big( \sum_{ p \leq \log T} 1 \Big)^{2\lceil 2^{m/2}\rceil } \ll T  2^{-2^{m/2}},
\end{split}
\end{align*}
   where the last estimation above follows from Stirling's formula (see \cite[(5.112)]{iwakow}), which asserts that
\begin{align}
\label{Stirling}
\begin{split}
 \Big( \frac{m}{e} \Big)^m \leq m! \ll \sqrt{m} \Big( \frac {m }{e} \Big)^{m}.
\end{split}
\end{align}

Now H\"older's inequality gives that if $2^{m} \geq (\log\log T)^{3}$,
\begin{align*}
  \sum_{(\log\log T)^3 \leq 2^m \leq \log T} & \sum_{\gamma \in  \mathcal{P}(m)}|\zeta'(\rho)|^{2} |\mathcal{N}(\rho, k)|^{2(1-1/k)} \\
\leq & \sum_{(\log\log T)^3 \leq 2^m \leq \log T}  \Big ( \text{meas}(\mathcal{P}(m)) \Big )^{1/4} \Big (
\sum_{T<\gamma\leq 2T}|\zeta'(\rho)|^{8}  \Big )^{1/4} \Big ( \sum_{T<\gamma\leq 2T} |\mathcal{N}(\rho, k)|^{4(1-1/k)}  \Big )^{1/2}.
\end{align*}

  Similar to the proof of \cite[Proposition 3.5]{Gao2021-5}, we have that
\begin{align}
\label{N2k2bound}
&  \sum_{T<\gamma\leq 2T}  |\mathcal{N}(\rho, k)|^{4(1-1/k)}  \ll T( \log T  )^{O(1)}.
\end{align}

  Also, note that by \cite[Theorem 1.1]{Kirila}, we have under RH, for any real $k>0$
\begin{align*}
J_k(T) \ll_{k} T (\log T)^{(k+1)^2}.
\end{align*}
Putting $k=4$, we have
\begin{align} \label{L8bound}
\sum_{T<\gamma\leq 2T}|\zeta'(\rho)|^{8}  \ll T(\log T)^{25}.
\end{align}

  It thus follows that, as $2^m \geq (\log \log T)^3$,
\begin{align*}
 \sum_{(\log\log T)^3 \leq 2^m \leq \log T} &\sum_{\gamma \in  \mathcal{P}(m)}|\zeta'(\rho)|^{2} |\mathcal{N}(\rho, k)|^{2(1-1/k)} \\
 \ll & \sum_{(\log\log T)^3 \leq 2^m \leq \log T} T \exp\left( -(\log 2)(\log\log T)^{3/2}/4 \right)(\log T)^{O(1)}
 \ll T(\log T)^{k^2}(\log \log T)^{-1}.
\end{align*}

Hence we may also assume that $0 \leq m \leq (3/\log 2)\log\log\log T$. We further note that,
\begin{align*}
\begin{split}
\text{meas}(\mathcal{S}(0)) \leq & \sum_{T<\gamma\leq 2T}  \sum^{\mathcal{J}}_{l=1}
\Big ( \alpha^{3/4}_{1}{|\mathcal
M}_{1, l}(\chi)| \Big)^{2\lceil 1/(10\alpha_{1})\rceil }.
\end{split}
\end{align*}
Now Lemma \ref{lem:2.5} applied to the last expression above yeilds
\begin{align}
\label{Smest}
\begin{split}
 \text{meas}(\mathcal{S}(0)) \ll &  \mathcal{J}T(\log T) (\lceil 1/(10\alpha_{1})\rceil !) (\alpha^{3/4}_{1})^{2 \lceil 1/(10\alpha_{1})\rceil}\Big (\sum_{p  \leq T^{\alpha_1}} \frac{1}{p}\Big )^{ \lceil 1/(10\alpha_{1})\rceil} +\mathcal{J}(\log T)^2(\sum_{p  \leq T^{\alpha_1}}1)^{2\lceil 1/(10\alpha_{1})\rceil} \\
\ll & \mathcal{J} T(\log T) \sqrt{\lceil 1/(10\alpha_{1})\rceil }\big(\frac {\lceil 1/(10\alpha_{1})\rceil }{e}\big)^{ \lceil 1/(10\alpha_{1})\rceil} (\alpha^{3/4}_{1})^{2 \lceil 1/(10\alpha_{1})\rceil}\Big (\sum_{p  \leq T^{\alpha_1}} \frac{1}{p}\Big )^{ \lceil 1/(10\alpha_{1})\rceil},
\end{split}
\end{align}
 where the last estimation above follows from Lemma \ref{RS} and \eqref{Stirling}. \newline

   We apply the bounds in \eqref{Jbound} to the last expression in \eqref{Smest}, getting
\begin{align} \label{meass0est}
\text{meas}(\mathcal{S}(0)) \ll &
\mathcal{J}T(\log T)  \sqrt{\lceil 1/(10\alpha_{1})\rceil } e^{-1/(10\alpha_{1})}\ll T e^{-(\log\log T)^{2}/20}  .
\end{align}

Now H\"older's inequality renders
\begin{align}
\label{LS0bound}
\begin{split}
& \sum_{\gamma \in S(0)}|\zeta'(\rho)|^{2} |\mathcal{N}(\rho, k)|^{2(1-1/k)}
\leq  \Big ( \text{meas}(\mathcal{S}(0)) \Big )^{1/4} \Big (
\sum_{T<\gamma\leq 2T}|\zeta'(\rho)|^{8}  \Big )^{1/4} \Big ( \sum_{T<\gamma\leq 2T} |\mathcal{N}(\rho, k)|^{4(1-1/k)}   \Big )^{1/2}.
\end{split}
\end{align}

  We use the bounds \eqref{N2k2bound}, \eqref{L8bound} and \eqref{meass0est} in \eqref{LS0bound} to conclude that
\begin{align*}
\sum_{\gamma \in \mathcal{S}(0)}|\zeta'(\rho)|^{2} |\mathcal{N}(\rho, k)|^{2(1-1/k)}    \ll T(\log T)^{k^2+3}.
\end{align*}

  Similarly, we define
\begin{align*}
  \mathcal{T} =& \{ T< \gamma \leq 2T : |k{\mathcal P}_{1}(\rho)| \leq  \frac {\alpha_{1}^{-3/4}}{1-1/k} \}.
\end{align*}
  We further write $\mathcal{T}^c$ for the complementary of $\mathcal{T}$ in the set $\{ T< \gamma \leq 2T \}$. Then similar to our approach above, we have
\begin{align*}
\text{meas}(\mathcal{T}^c) \ll & T e^{-(\log\log T)^{2}/20}.
\end{align*}
  Our arguments above allow us to deduce
\begin{align*}
\sum_{\gamma \in \mathcal{T}^c}|\zeta'(\rho)|^{2} |\mathcal{N}(\rho, k)|^{2(1-1/k)}    \ll T(\log T)^{k^2+3}.
\end{align*}

  Thus we may further assume that $j \geq 1$ and $\gamma \in \mathcal{T}$. Note that
\begin{align*}
 & \left\{ \gamma \in \mathcal{T}, \gamma \in \mathcal{P}(m), \gamma \in \mathcal{S}(j), 0 \leq m \leq (3/\log 2)\log\log\log T, 1 \leq j \leq \mathcal{J} \right \} =  \bigcup_{m=0}^{(3/\log 2)\log\log\log T}\bigcup_{j=1}^{ \mathcal{J}} \Big (\mathcal{S}(j)\bigcap \mathcal{P}(m) \bigcap \mathcal{T}\Big ).
\end{align*}
 Thus, it suffices to show that
\begin{align}
\label{sumovermj}
  \sum_{m=0}^{(3/\log 2)\log\log\log T}\sum_{j=0}^{\mathcal{J}}\sum_{\gamma \in \mathcal{S}(j)\bigcap \mathcal{P}(m)\bigcap \mathcal{T}} |\zeta'(\rho)|^{2} |\mathcal{N}(\rho, k)|^{2(1-1/k)}
   \ll T(\log T)^{k^2+3}.
\end{align}

  Now,  we consider the sum of $|\zeta'(\rho)|^{2} |\mathcal{N}(\rho, k)|^{2(1-1/k)}$ over $\mathcal{S}(j)\bigcap \mathcal{P}(m) \bigcap \mathcal{T}$ by fixing an $m$ such that $0 \leq m \leq (3/\log 2)\log\log\log T$ and fixing a $j$ with $1 \leq j \leq \mathcal{J}$.  We then deduce from \eqref{basicest} that
\begin{align}
\label{zetaNbounds}
\begin{split}
   |\zeta'(\rho)|^{2} & |\mathcal{N}(\rho, k)|^{2(1-1/k)} \\
\ll & (\log T)^2\exp \left(\frac {2}{\alpha_j} \right) \exp \Big (
 2\Re\sum^j_{l=1}{\mathcal M}_{l,j}(\gamma)+2 \Re\sum^{\log \log T /2}_{m=0}P_m(\gamma) \Big )|\mathcal{N}(\rho, k)|^{2(1-1/k)} \\
\ll & (\log T)^2\exp \left(\frac {2}{\alpha_j} \right) \exp \Big (
 2\Re\sum^j_{l=1}{\mathcal M}_{l,j}(\gamma)+2 \Re\sum^{\log \log T /2}_{m=0}P_m(\gamma) \Big )|\mathcal{N}_1(\rho, k)|^{2(1-1/k)}\prod^{\mathcal J}_{l=2}|\mathcal{N}_l(\rho, k)|^{2(1-1/k)} .
\end{split}
\end{align}

  Now, for any $z \in \mc$, by the Taylor formula with integral remainder, we have
\begin{align*}
\begin{split}
 \Big|e^z-\sum^{d-1}_{j=0}\frac {z^j}{j!}\Big| =& \Big|\frac 1{(d-1)!}\int\limits^z_0e^t(z-t)^{d-1} \dif t\Big|=\Big|\frac {z^d}{(d-1)!}\int\limits^1_0 e^{zs}(1-s)^{d-1} \dif s\Big| \\
\leq & \frac {|z|^d}{d!}\max (1, e^{\Re z}) \leq \frac {|z|^d}{d!}e^z \max (e^{-z}, e^{\Re z-z}) \leq \frac {|z|^d}{d!}e^z e^{|z|}.
\end{split}
 \end{align*}

The above computation implies that
\begin{align}
\label{ezrelation}
\begin{split}
  \sum^{d-1}_{j=0}\frac {z^j}{j!} =e^z \left( 1+O \left( \frac {|z|^d}{d!}e^{|z|} \right) \right).
\end{split}
 \end{align}

   As $\gamma \in \mathcal{T}$, we apply the above formula that, by setting $z= k{\mathcal P}_{1}(\rho)$, $d=[e^2\alpha^{-3/4}_1]$ \eqref{ezrelation}.  In a manner similar to the bound after \cite[(5.2)]{Kirila} and applying \eqref{Stirling}, we get
\begin{align}
\label{N1bound}
\begin{split}
  |\mathcal{N}_1(\rho, k)|^2 = \exp\Big ( 2k \Re {\mathcal P}_{1}(\half+i\gamma)\Big ) (1+O(e^{-\alpha^{-3/4}_1})).
\end{split}
 \end{align}

Now applying \eqref{N1bound} in \eqref{zetaNbounds} yields
\begin{align*}
\begin{split}
 |\zeta'(\rho)|^{2} & |\mathcal{N}(\rho, k)|^{2(1-1/k)} \\
\ll & (\log T)^2\exp \left(\frac {2}{\alpha_j} \right) \exp \Big (
 2\Re {\mathcal M}_{1,j}(\gamma)+ 2(k-1)\Re {\mathcal P}_{1}(\half+i\gamma)+2\Re\sum^j_{l=2}{\mathcal M}_{l,j}(\gamma)+2 \Re\sum^{\log \log T /2}_{m=0}P_m(\gamma) \Big ) \\
& \hspace*{2cm} \times \prod^{\mathcal J}_{l=2}|\mathcal{N}_l(\rho, k)|^{2(1-1/k)} .
\end{split}
\end{align*}

   We want to now separate the sums over $p \leq 2^{m+1}$ on the right-hand side of the above expression from those over $p >2^{m+1}$. To do so, we note that if $\gamma \in \mathcal{P}(m)$, then
\begin{align*}
\begin{split}
 & \Big | 2\Re \sum_{  p  \leq 2^{m+1}}  \frac{p^{-i\gamma}}{\sqrt{p}}\frac{1}{p^{1/(\alpha_j\log T)}}\frac{\log (T^{\alpha_j}/p)}{\log T^{\alpha_j}}+2(k-1)\Re \sum_{  p  \leq 2^{m+1}} \frac{p^{-i\gamma}}{\sqrt{p}}+
  2\Re \sum_{p  \leq \log T } \frac{p^{-i\gamma}}{2p} \Big |
   \\
 \leq &  2\Big |\sum_{  p  \leq 2^{m+1}}  \frac{p^{-i\gamma}}{\sqrt{p}}\frac{1}{p^{1/(\alpha_j\log T)}}\frac{\log (T^{\alpha_j}/p)}{\log T^{\alpha_j}}\Big |+2(1-k)\Big |\sum_{  p  \leq 2^{m+1}} \frac{p^{-i\gamma}}{\sqrt{p}}\Big |+
  \Big |\sum_{p  \leq 2^{m+1}} \frac{p^{-i\gamma}}{2p}\Big |
  +O(1) \leq (4-2k)2^{m/2+3}+O(1),
\end{split}
\end{align*}
  where we note by Lemma \ref{RS} that (using $\log (1+x) \leq x$ and $x \leq 2^x$ (this last inequality is valid for $x \geq 1$ and $x=1/2$))
\begin{align*}
\begin{split}
  \Big |\sum_{p  \leq 2^{m+1}} \frac{p^{-i\gamma}}{2p}\Big | \leq \sum_{p  \leq 2^{m+1}} \frac{1}{2p}
   =\frac 12\log (m+1)+O(1) \leq \frac m2+O(1) \leq 2^{m/2}+O(1).
\end{split}
 \end{align*}

 We deduce from the above that
\begin{align}
\label{LboundinSP0}
\begin{split}
  &  \sum_{\gamma \in \mathcal{S}(j)\bigcap \mathcal{P}(m)\bigcap \mathcal{T}} |\zeta'(\rho)|^{2} |\mathcal{N}(\rho, k)|^{2(1-1/k)} \\
   \ll & (\log T)^2e^{(2-k)2^{m/2+4}} \exp \left(\frac {2}{\alpha_j} \right) \sum_{\chi \in \mathcal{S}(j)\bigcap \mathcal{P}(m)}
\exp \Big ( 2 \Re{\mathcal M}'_{1,j}(\gamma)+2\Re \sum^j_{l=2}{\mathcal M}_{l,j}(\gamma)\Big )\prod^{\mathcal J}_{l=2}|\mathcal{N}_l(\rho, k)|^{2(1-1/k)} \\
  \\
\ll &  (\log T)^2e^{(2-k)2^{m/2+4}} \exp \left(\frac {2}{\alpha_j} \right) \\
& \times \sum_{\gamma \in \mathcal{S}(j)} \Big (2^{m/10}|P_m(\gamma)| \Big )^{2\lceil 2^{m/2}\rceil }
\exp \Big ( 2 \Re{\mathcal M}'_{1,j}(\gamma)+2\Re \sum^j_{l=2}{\mathcal M}_{l,j}(\gamma)\Big )\prod^{\mathcal J}_{l=2}|\mathcal{N}_l(\rho, k)|^{2(1-1/k)},
\end{split}
 \end{align}
   where we define
\begin{align*}
\begin{split}
 {\mathcal M}'_{1,j}(\gamma)= \sum_{ 2^{m+1}< p \leq T^{\alpha_1} }
 \frac{p^{-i\gamma}}{\sqrt{p}}c_j(p,k),
\end{split}
\end{align*}
  and set for any $1 \leq j \leq \mathcal T$ and $n \in \intz$,
\begin{align}
\label{cpdef}
\begin{split}
  c_j(p,n)= \frac{1}{p^{1/(\alpha_j\log T)}}\frac{\log (T^{\alpha_j}/p)}{\log T^{\alpha_j}}+n-1.
\end{split}
 \end{align}

    We note that as $0 \leq m \leq (3/\log 2)\log\log\log T $ and $T$ is large, we have
\begin{align*}
\begin{split}
 & \Big | \sum_{ p \leq 2^{m+1}  } \frac{p^{-i\gamma}}{\sqrt{p}}c_j(p,k)\Big |
\leq  (2-k)\sum_{ p < 2^{m+1}  }
 \frac{1}{\sqrt{p}} \leq \frac{100(2-k)(\log \log T )^{3/2}}{\log \log \log T }.
\end{split}
 \end{align*}

  It follows that for $\gamma \in \mathcal{S}(j)$ and large $T$,
\begin{align*}
\begin{split}
  |{\mathcal M}'_{1,j}(\gamma)| \leq & 100(2-k)(\log \log T )^{3/2}(\log \log \log T )^{-1}+|{\mathcal M}_{1,j}(\gamma)|+ \left( 1-\frac 1k \right)|k{\mathcal P}_{1}(\rho)| \leq 2.01\alpha^{-3/4}_1 \\
=& 2.01(\log \log T )^{3/2}.
\end{split}
 \end{align*}

   We apply \eqref{ezrelation} with $z= {\mathcal M}'_{1,j}(\gamma)$, $d=[e^2\alpha^{-3/4}_1]$ and argue as before to see that
\begin{align}
\label{eM1bound}
\begin{split}
\exp\Big ( 2\Re{\mathcal M}'_{1,j}(\gamma)\Big ) \ll \Big | E_{e^2\alpha^{-3/4}_1}( {\mathcal M}'_{1,j}(\gamma)) \Big |^2.
\end{split}
 \end{align}

    As we also have $|{\mathcal M}_{l, j}| \leq  \alpha^{-3/4}_l$ when $\gamma \in \mathcal{S}(j)$, we repeat our arguments to deduce, if
\[ \left| k{\mathcal P}_{l} \left( \frac 12+i\gamma \right) \right| \leq \frac{\alpha^{-3/4}_l}{1-1/k}, \]
then
\begin{align}
\label{eMNprodest1}
\begin{split}
\exp \Big ( 2 \Re {\mathcal M}_{l,j}(\gamma)\Big )|\mathcal{N}_l(\rho, k)|^{2(1-1/k)} \leq
\left( 1+O \left( e^{-\alpha^{-3/4}_l} \right) \right)\Big | E_{e^2\alpha^{-3/4}_l}( {\mathcal M}'_{l,j}(\gamma)) \Big |^2,
\end{split}
 \end{align}
   where we define, as before, for $2 \leq l \leq j$,
\begin{align*}
\begin{split}
 {\mathcal M}'_{l,j}(\gamma)= \sum_{ p \in I_l  }\frac{p^{-i\gamma}}{\sqrt{p}}c_j(p,k).
\end{split}
 \end{align*}

  On the other hand, when $|k{\mathcal P}_{l}(\frac 12+i\gamma)| > \alpha^{-3/4}_l/(1-1/k)$, we have
\begin{align}
\label{Nlbound}
\begin{split}
  |{\mathcal N}_l(\rho, k)| \le & \sum_{r=0}^{\ell_l} \frac{|k{\mathcal P}_l(\rho)|^r}{r!}  \le
|k{\mathcal P}_l(\rho)|^{\ell_l} \sum_{r=0}^{\ell_l} \left( \left(1-\frac 1k \right)\alpha^{3/4}_l \right)^{\ell_l-r} \frac{1}{r!}  \le
|(k-1)\alpha^{3/4}_l{\mathcal P}_l(\rho)|^{\ell_l}e^{\alpha^{-3/4}_l/(1-1/k)} \\
\leq &  |(k-1)e^{(e(1-1/k))^{-1}}\alpha^{3/4}_l{\mathcal P}_l(\rho)|^{\ell_l}.
\end{split}
\end{align}

  Note that we also have
\begin{align*}
\begin{split}
\exp \Big ( 2 \Re {\mathcal M}_{l,j}(\gamma)\Big ) \leq  \exp (2\alpha^{-3/4}_l) \leq 2^{\ell_l}.
\end{split}
 \end{align*}

   We now define for all $1 \leq l \leq \mathcal J$,
\begin{align*}
\begin{split}
{\mathcal Q}_l(\rho, k):=
\begin{cases}
  \Big ( 2(1-k)e^{(e(1-1/k))^{-1}}\alpha^{3/4}_l{\mathcal P}_l(\rho) \Big )^{\lceil 1-1/k \rceil\ell_l}, \quad l \neq j+1, \\
  \Big (4(1-k)\alpha^{3/4}_l{\mathcal P}_l(\rho) \Big )^{\lceil 1/(10\alpha_{l})\rceil}, \quad l=j+1.
\end{cases}
\end{split}
 \end{align*}

 It follows from the above discussion that if $|k{\mathcal P}_{l}(\frac 12+i\gamma)| > \alpha^{-3/4}_l/(1-1/k)$ and $2 \leq l \leq j$, then
\begin{align}
\label{eMNprodest2}
\begin{split}
\exp \Big ( 2 \Re {\mathcal M}_{l,j}(\gamma)\Big )\Big|\mathcal{N}_l(\rho, k)\Big|^{2(1-1/k)} \leq \Big |{\mathcal Q}_l(\rho, k) \Big|^{2}.
\end{split}
 \end{align}

    Our arguments above also allow us to deduce that for $j+2 \leq l \leq \mathcal J$,
\begin{align}
\label{Nlbound1}
\begin{split}
 & |{\mathcal N}_l(\rho, k)|^{2(1-1/k)}
\le  (1+O(e^{-\alpha^{-3/4}_l})) \Big |{\mathcal N}_l(\rho, k-1)\Big |^2+\Big |{\mathcal Q}_l(\rho, k) \Big|^{2}.
\end{split}
\end{align}

   We apply the bounds given in \eqref{eM1bound}, \eqref{eMNprodest1}, \eqref{eMNprodest2}  and \eqref{Nlbound1} in \eqref{LboundinSP0} to arrive at
\begin{align*}
\begin{split}
   \sum_{\gamma \in \mathcal{S}(j)\bigcap \mathcal{P}(m)\bigcap \mathcal{T}} & |\zeta'(\rho)|^{2} |\mathcal{N}(\rho, k)|^{2(1-1/k)} \\
\ll &  (\log T)^2e^{(2-k)2^{m/2+4}} \exp \left(\frac {2}{\alpha_j} \right) \sum_{\gamma \in \mathcal{S}(j)} \Big (2^{m/10}|P_m(\gamma)| \Big )^{2\lceil 2^{m/2}\rceil }\Big | E_{e^2\alpha^{-3/4}_1}( {\mathcal M}'_{1,j}(\gamma)) \Big |^2 \\
& \times
\prod^{j}_{l=2} \Big (\big(1+O(e^{-\alpha^{-3/4}_l})\big )\Big | E_{e^2\alpha^{-3/4}_l}( {\mathcal M}'_{l,j}(\gamma)) \Big |^2+ \Big |{\mathcal Q}_l(\rho,k) \Big|^{2} \Big ) \\
& \times  |{\mathcal N}_{j+1}(\rho, k)|^{2(1-1/k)}\prod^{\mathcal J}_{l=j+2} \Big ( (1+O(e^{-\alpha^{-3/4}_l}))|{\mathcal N}_l(\rho, k-1)|^2+\Big |{\mathcal Q}_l(\rho, k) \Big|^{2} \Big ) \\
\ll &  (\log T)^2e^{(2-k)2^{m/2+4}} \exp \left(\frac {2}{\alpha_j} \right) \sum_{\gamma \in \mathcal{S}(j)} \Big (2^{m/10}|P_m(\gamma)| \Big )^{2\lceil 2^{m/2}\rceil }\Big | E_{e^2\alpha^{-3/4}_1}( {\mathcal M}'_{1,j}(\gamma)) \Big |^2 \\
& \times
\prod^{j}_{l=2} \Big (\Big | E_{e^2\alpha^{-3/4}_l}( {\mathcal M}'_{l,j}(\gamma)) \Big |^2+ \Big |{\mathcal Q}_l(\rho,k) \Big|^{2} \Big )|{\mathcal N}_{j+1}(\rho, k)|^{2(1-1/k)}\prod^{\mathcal J}_{l=j+2}\Big ( |{\mathcal N}_l(\rho, k-1)|^2+\Big |{\mathcal Q}_l(\rho, k) \Big|^{2} \Big ),
\end{split}
\end{align*}
  where the last estimation above follows by virtue of the bound
\begin{align*}
\begin{split}
  &  \prod^{\mathcal J}_{\substack{l=2 \\ l \neq j+1}} \big(1+O(e^{-\alpha^{-3/4}_l})\big ) \ll 1.
\end{split}
\end{align*}

   We also deduce from the description on $\mathcal{S}(j)$ and above that when $j \geq 1$,
\begin{align}
\label{upperboundprodE0}
\begin{split}
\sum_{\gamma \in \mathcal{S}(j)\bigcap \mathcal{P}(m)\bigcap \mathcal{T}} & |\zeta'(\rho)|^{2} |\mathcal{N}(\rho, k)|^{2(1-1/k)} \\
\ll & (\log T)^2e^{(2-k)2^{m/2+4}} \exp \left(\frac {2}{\alpha_j} \right)
 \sum^{ \mathcal{J}}_{u=j+1} \sum_{\gamma \in \mathcal{S}(j)} \Big (2^{m/10}|P_m(\gamma)| \Big )^{2\lceil 2^{m/2}\rceil }
\Big | E_{e^2\alpha^{-3/4}_1}( {\mathcal M}'_{1,j}(\gamma)) \Big |^2 \\
& \hspace*{1cm} \times
\prod^{j}_{l=2} \Big (\Big | E_{e^2\alpha^{-3/4}_1}( {\mathcal M}'_{l,j}(\gamma)) \Big |^2+ \Big |{\mathcal Q}_l(\rho,k) \Big|^{2} \Big )\prod^{\mathcal J}_{l=j+2}\Big ( |{\mathcal N}_l(\rho, k-1)|^2+\Big |{\mathcal Q}_l(\rho, k) \Big|^{2} \Big ) \\
& \hspace*{1cm} \times \Big | \alpha^{3/4}_{j+1}{\mathcal M}_{j+1,u}(\gamma)\Big|^{2\lceil 1/(10\alpha_{j+1})\rceil }|{\mathcal N}_{j+1}(\rho, k)|^{2(1-1/k)}.
\end{split}
\end{align}

   We further simplify the right-hand expression above by noting that if $|k{\mathcal P}_{j+1}(\rho)| \leq \alpha^{-3/4}_{j+1}/(1-1/k)$, then similar to \eqref{N1bound},
\begin{align}
\label{Njplusonebound0}
\begin{split}
 |\mathcal{N}_{j+1}(\rho, k)|^{2(1-1/k)} \ll  \exp\Big ( 2(k-1)\Re {\mathcal P}_{j+1}(\rho)\Big ) \ll \exp\Big ( 2\alpha^{-3/4}_{j+1} \Big )
\ll 2^{2\lceil 1/(10\alpha_{j+1})\rceil }.
\end{split}
\end{align}
   While when $|k{\mathcal P}_{j+1}(\rho)| > \alpha^{-3/4}_{j+1}/(1-1/k)$, we have by \eqref{Nlbound},
\begin{align}
\label{Njplusonebound}
\begin{split}
|\mathcal{N}_{j+1}(\rho, k)|^{2(1-1/k)} \leq |{\mathcal Q}_{j+1}(\rho, k)|^{2}.
\end{split}
\end{align}

   We use \eqref{Njplusonebound0} and \eqref{Njplusonebound} in \eqref{upperboundprodE0} to deduce that
\begin{align*}
\begin{split}
 \sum_{\gamma \in \mathcal{S}(j)\bigcap \mathcal{P}(m)\bigcap \mathcal{T}} & |\zeta'(\rho)|^{2} |\mathcal{N}(\rho, k)|^{2(1-1/k)}  \\
\ll & (\log T)^2e^{(2-k)2^{m/2+4}} \exp \left(\frac {2}{\alpha_j} \right)
 \sum^{ \mathcal{J}}_{u=j+1} \sum_{\gamma \in \mathcal{S}(j)} \Big (2^{m/10}|P_m(\gamma)| \Big )^{2\lceil 2^{m/2}\rceil }
\Big | E_{e^2\alpha^{-3/4}_1}( {\mathcal M}'_{1,j}(\gamma)) \Big |^2 \\
& \hspace*{1cm}\times
\prod^{j}_{l=2} \Big (\Big | E_{e^2\alpha^{-3/4}_l}( {\mathcal M}'_{l,j}(\gamma)) \Big |^2+ \Big |{\mathcal Q}_l(\rho,k) \Big|^{2} \Big )\prod^{\mathcal J}_{l=j+2}\Big ( |{\mathcal N}_l(\rho, k-1)|^2+\Big |{\mathcal Q}_l(\rho, k) \Big|^{2} \Big )\\
& \hspace*{1cm} \times \Big ( \Big | 2\alpha^{3/4}_{j+1}{\mathcal M}_{j+1,u}(\gamma)\Big|^{2\lceil 1/(10\alpha_{j+1})\rceil } +\Big |{\mathcal Q}_{j+1}(\rho, k)\Big |^{2}\Big | \alpha^{3/4}_{j+1}{\mathcal M}_{j+1,u}(\gamma)\Big |^{2\lceil 1/(10\alpha_{j+1})\rceil }\Big ) \\
\ll & (\log T)^2e^{(2-k)2^{m/2+4}} \exp \left(\frac {2}{\alpha_j} \right)
 \sum^{ \mathcal{J}}_{u=j+1} \sum_{\gamma \in \mathcal{S}(j)} \Big (2^{m/10}|P_m(\gamma)| \Big )^{2\lceil 2^{m/2}\rceil }
\Big | E_{e^2\alpha^{-3/4}_1}( {\mathcal M}'_{1,j}(\gamma)) \Big |^2 \\
&\hspace*{1cm} \times
\prod^{j}_{l=2} \Big (\Big | E_{e^2\alpha^{-3/4}_l}( {\mathcal M}'_{l,j}(\gamma)) \Big |^2+ \Big |{\mathcal Q}_l(\rho,k) \Big|^{2} \Big )\prod^{\mathcal J}_{l=j+2}\Big ( |{\mathcal N}_l(\rho, k-1)|^2+\Big |{\mathcal Q}_l(\rho, k) \Big|^{2} \Big )\\
& \hspace*{1cm} \times \Big ( \Big | 2\alpha^{3/4}_{j+1}{\mathcal M}_{j+1,u}(\gamma)\Big |^{2\lceil 1/(10\alpha_{j+1})\rceil } +\Big |{\mathcal Q}_{j+1}(\rho, k)\Big |^{4}+ \Big | \alpha^{3/4}_{j+1}{\mathcal M}_{j+1,u}(\gamma)\Big |^{4\lceil 1/(10\alpha_{j+1})\rceil }\Big ).
\end{split}
\end{align*}

  As the treatments are similar, it suffices to estimate the following expression given by
\begin{align}
\label{Sdef}
\begin{split}
 S:=&(\log T)^2e^{(2-k)2^{m/2+4}} \exp \left(\frac {2}{\alpha_j} \right)
 \sum^{ \mathcal{J}}_{u=j+1} \sum_{\gamma \in \mathcal{S}(j)} \Big (2^{m/10}|P_m(\gamma)| \Big )^{2\lceil 2^{m/2}\rceil }
\Big | E_{e^2\alpha^{-3/4}_1}( {\mathcal M}'_{1,j}(\gamma)) \Big |^2 \\
& \times
\prod^{j}_{l=2} \Big (\Big | E_{e^2\alpha^{-3/4}_l}( {\mathcal M}'_{l,j}(\gamma)) \Big |^2+ \Big |{\mathcal Q}_l(\rho,k) \Big|^{2} \Big )\prod^{\mathcal J}_{l=j+2}\Big ( |{\mathcal N}_l(\rho, k-1)|^2+\Big |{\mathcal Q}_l(\rho, k) \Big|^{2} \Big ) \Big | 2\alpha^{3/4}_{j+1}{\mathcal M}_{j+1,u}(\gamma)\Big |^{2\lceil 1/(10\alpha_{j+1})\rceil } \\
:=&  \sum^{ \mathcal{J}}_{u=j+1}S_u.
\end{split}
\end{align}

  It remains to evaluate $S_u$ for a fixed $u$.  To that end, we define a totally multiplicative function $c_j(n,k)$ such that $c_j(p,k)$ is defined in \eqref{cpdef}. This allows us to write for each $1 \leq l \leq j$,
\begin{align*}
 E_{e^2\alpha^{-3/4}_l}( {\mathcal M}'_{l,j}(\gamma))=\sum_{n_l \leq T^{\alpha_{l}\lceil e^2\alpha^{-3/4}_l \rceil}}v_l(n_l) n_l^{-i\gamma},
\quad \mbox{where} \quad
 v_l(n_l)=
\begin{cases}
\displaystyle  \frac{c_j(n_1,k)b_{1,m}(n_1)}{g(n_1)\sqrt{n_1}}, & l=1, \\
\displaystyle  \frac{c_j(n_l,k)b_l(n_l)}{g(n_l)\sqrt{n_l}}, & l \neq 1.
\end{cases}
\end{align*}
  and where the function $b_{1,m}(n)$ is defined to be take values $0$ or $1$ and $b_{1,m}(n)=1$ if and only if $n$ is composed of at most $\ell_1$ primes, all from the interval $(2^{m+1}, T^{\alpha_1}]$. \newline

  Note that for $1 \leq l \leq j$ and integers $n_l$ satisfying $b_l(n_l)=1$ when $2 \leq l \leq j$ and $b_{1,m}(n_1)=1$ when $l=1$,
\begin{align}
\label{clbound}
  |c_j(n_l,k)| \leq |2-k|^{\ell_l}.
\end{align}

  We also write for each $j+1 \leq l \leq \mathcal J$,
\begin{align*}
\begin{split}
 {\mathcal N}_l(\rho, k-1)=\sum_{n_l \leq T^{\alpha_{l}\lceil e^2\alpha^{-3/4}_l \rceil}}v_l(n_l) n_l^{-i\gamma},
\end{split}
\end{align*}
  where by \eqref{Nj},
\begin{align}
\label{vldefN}
\begin{split}
 v_l(n_l)=  \frac{(k-1)^{\Omega(n_l)}}{g(n_l)\sqrt{n_l}}  b_l(n_l) .
\end{split}
\end{align}

   We define the functions $q_{l}(n)$ for $0 \leq l \leq \mathcal J$ such that $q_{l}(n)=0$ or $1$, and $q_{l}(n)=1$ if and only if $n$ is composed of exactly $\lceil 1-1/k \rceil\ell_l$ primes (counted with multiplicity), all from the interval $I_{l}$ when $l \neq 0, j+1$. While we define $q_{0}(n)=1$ (resp. $q_{j+1}(n)=1$) if and only if $n$ is composed of exactly $\lceil 2^{m/2}\rceil $ (resp. $\lceil 1/(10\alpha_{j+1})\rceil $ ) primes (counted with multiplicity), all from the interval $(2^m, 2^{m+1}]$ (resp. $I_{j+1}$). Further, we define for $0 \leq l \leq \mathcal J$,
\begin{align*}
\begin{split}
 \beta_l(n_l) =
\begin{cases}
 \Big (2^{m/10-1}\Big )^{\lceil 2^{m/2}\rceil }\lceil 2^{m/2}\rceil! , & l=0, \\
\Big ( 2\alpha^{3/4}_{l}\Big)^{\lceil 1/(10\alpha_{l})\rceil }\lceil 1/(10\alpha_{l})\rceil)!c_u(n_l,1), & l=j+1, \\
\Big ( 2(1-k)e^{(e(1-1/k))^{-1}}\alpha^{3/4}_l \Big )^{\lceil 1-1/k \rceil\ell_l}(\lceil 1-1/k \rceil\ell_l)!, &
1\leq l \leq \mathcal J, \ \ l \neq j+1.
\end{cases}
\end{split}
\end{align*}

  With the above notations, we write
\begin{align*}
\begin{split}
\Big (2^{m/10}P_m(\gamma) \Big )^{\lceil 2^{m/2}\rceil }= & \sum_{ \substack{ n_0 }} w_0(n_0)n_0^{-2i\gamma}, \quad
\Big ( 2\alpha^{3/4}_{j+1}{\mathcal M}_{j+1,u}(\gamma)\Big)^{\lceil 1/(10\alpha_{j+1})\rceil } = \sum_{ \substack{ n_{j+1} }} w_{j+1}(n_{j+1})n_{j+1}^{-i\gamma}, \\
{\mathcal Q}_l(\rho, k) =&  \sum_{ \substack{ n_l}} w_l(n_l)n_l^{-i\gamma}, \quad
1\leq l \leq \mathcal J, \ \ l \neq j+1,
\end{split}
\end{align*}
  where
\begin{align*}
\begin{split}
 w_0(n_0)= & \frac{q_{0}(n_{0})}{n_0}\frac{\beta_0(n_0)}{g(n_0)}, \quad w_{j+1}(n_{j+1}) = \frac{q_{j+1}(n_{j+1})}{\sqrt{n_{j+1}}}\frac{\beta_{j+1}(n_{j+1}) }{g(n_{j+1})}, \\
& w_l(n_l) =  \frac{q_{l}(n_{l})}{\sqrt{n_l}}\frac{ \beta_{l}(n_{l})  }{g(n_l)}, \quad
1\leq l \leq \mathcal J, \ \ l \neq j+1.
\end{split}
\end{align*}

  We note that by \eqref{Stirling}, we have for any integer $n_0$,
\begin{align}
\label{u0bound}
 \beta_0(n_0) \leq  \lceil 2^{m/2}\rceil\Big( \frac{2^{m/10-1}\lceil 2^{m/2}\rceil }{e} \Big)^{\lceil 2^{m/2}\rceil}
\ll e^{(\log \log T)^3},
\end{align}
  since we have $0 \leq m \leq (3/\log 2)\log\log\log T $. Also, for any integer $n_{j+1}$ such that $q_{j+1}(n_{j+1})=1$, we note that the estimation given \eqref{clbound} continues to hold for $k=1$ and $j=s$. This together with \eqref{Stirling} implies that
\begin{align}
\label{ujbound}
\begin{split}
 & \beta_{j+1}(n_{j+1})
\leq  \lceil 1/(10\alpha_{j+1})\rceil\Big( \frac{2\alpha^{3/4}_{j+1}\lceil 1/(10\alpha_{j+1})\rceil }{e} \Big)^{\lceil 1/(10\alpha_{j+1})\rceil}
\ll e^{(\log \log T)^3}.
\end{split}
\end{align}

  Moreover, for any integer $n_{l}$ such that $q_{l}(n_{l})=1$ for $1 \leq l \leq \mathcal J, l \neq j+1$, applying \eqref{Stirling}, we get
\begin{align}
\label{ulbound}
\begin{split}
  \beta_l(n_l)
\leq &  \lceil 1-1/k \rceil\ell_l\Big( \frac {2(1-k)e^{(e(1-1/k))^{-1}}\alpha^{3/4}_l  \lceil 1-1/k \rceil\ell_l}{e} \Big)^{\lceil 1-1/k \rceil\ell_l} \\
\leq & \lceil 1-1/k \rceil\ell_l \Big( 2(1-k)e^{(e(1-1/k))^{-1}+2} \lceil 1-1/k \rceil \Big)^{\ell_l}.
\end{split}
\end{align}

   It follows from this, \eqref{clbound}-\eqref{vldefN} that we can write $|{\mathcal N}_l(\rho, k-1)|^2 + |{\mathcal Q}_l(\rho, k)|^{2}$ for $j+1 \leq l \leq \mathcal J$ as a Dirichlet polynomial of the form
\begin{align*}
  \sum_{n_l,n'_l \leq T^{ \lceil 1-1/k \rceil \alpha_{l}\lceil e^2\alpha^{-3/4}_l \rceil }}\frac{a_{n_l}a_{n'_l}b_l(n_l)b_l(n'_l)}{\sqrt{n_ln'_l}}n^{-i\gamma}_l{n'}^{i\gamma}_l,
\end{align*}
   where for some constant $B(k)$, whose value depends on $k$ only,
\begin{align}
\label{anbound1}
  |a_{n_l}|, |a_{n'_l}|  \leq B(k)^{\ell_j}.
\end{align}

  We may also recast
\begin{align} \label{diripoly1}
\begin{split}
 \Big (2^{m/10}|P_m(\gamma)| \Big )^{2\lceil 2^{m/2}\rceil }, \quad
\Big | E_{e^2\alpha^{-3/4}_1}( {\mathcal M}'_{1,j}(\gamma)) \Big |^2, \quad
  \Big | 2\alpha^{3/4}_{j+1}{\mathcal M}_{j+1,s}(\gamma)\Big |^{2\lceil 1/(10\alpha_{j+1})\rceil },
\end{split}
\end{align}
  and
\begin{align} \label{diripoly2}
\begin{split}
 \Big | E_{e^2\alpha^{-3/4}_l}( {\mathcal M}'_{l,j}(\gamma)) \Big |^2+ \Big |{\mathcal Q}_l(\rho,k) \Big|^{2}, \quad 2 \leq l \leq j
\end{split}
\end{align}
  into similar Dirichlet polynomials of the form
\begin{align*}
  \sum_{n,n'}\frac{a_{n}a_{n'}}{nn'}n^{-2i\gamma}{n'}^{2i\gamma} \ \ \text{or} \ \ \sum_{n,n'}\frac{a_{n}a_{n'}}{\sqrt{nn'}}n^{-i\gamma}{n'}^{i\gamma}.
\end{align*}
   We may enlarge $B(k)$ to see by \eqref{clbound}, \eqref{u0bound}-\eqref{ulbound},
\begin{align}
\label{anbound2}
  |a_{n}|, |a_{n'}|  \leq B(k)^{\ell_1}e^{(\log \log T)^3} \ll e^{2(\log \log T)^3}.
\end{align}
 Also, the lengths of the four Dirichlet polynomials in \eqref{diripoly1} and \eqref{diripoly2} are bounded by, respectively,
\begin{align}
\label{DPlengths}
 2^{((3/\log 2)\log\log\log T +1)\lceil 2^{(3/\log 2)\log\log\log T/2}\rceil }, \quad T^{ \alpha_{1}\lceil e^2\alpha^{-3/4}_1 \rceil},
\quad T^{ \alpha_{j+1}\lceil 1/(10\alpha_{j+1})\rceil }, \quad T^{ \lceil 1-1/k \rceil \alpha_{l}\lceil e^2\alpha^{-3/4}_l \rceil}.
\end{align}

  We apply the above discussions to write $S_u$ for simplicity as
\begin{align}
\label{Sudef}
\begin{split}
 S_u=& (\log T)^2e^{(2-k)2^{m/2+4}} \exp \left(\frac {2}{\alpha_j} \right)\sum_{a, b, c,d}\frac {A_{a,b,c,d}}{\sqrt{ab}cd}a^{i\gamma}b^{-i\gamma}c^{2i\gamma}d^{-2i\gamma}.
\end{split}
\end{align}
  where, by \eqref{anbound1}, \eqref{anbound2} and \eqref{DPlengths},
\begin{align*}
  A_{a,b,c,d} \ll & e^{4{\mathcal J}(\log \log T)^3} \ll e^{4(\log \log T)^4}, \\
  a, b \leq & T^{\sum^{\mathcal{J}}_{j=1}\lceil 1-1/k \rceil \alpha_{j}\lceil e^2\alpha^{-3/4}_j \rceil+\alpha_{j+1}\lceil 1/(10\alpha_{j+1})\rceil} \leq T^{40e^210^{-M/4}+1/5}, \\
 c, d \leq & 2^{((3/\log 2)\log\log\log T +1)\lceil 2^{(3/\log 2)\log\log\log T/2}\rceil } \ll T^{\varepsilon}.
\end{align*}

 It follows from this that if we apply Lemma \ref{Lem-Landau} to evaluate
\begin{align*}
\begin{split}
 \sum_{T< \gamma \leq 2T}\sum_{a, b, c,d}\frac {A_{a,b,c,d}}{\sqrt{ab}cd}a^{i\gamma}b^{-i\gamma}c^{2i\gamma}d^{-2i\gamma},
\end{split}
\end{align*}
  the error term in \eqref{sumgamma} contributes
\begin{align*}
\begin{split}
 \ll (\log T)^2\sum_{a, b, c,d}|A_{a,b,c,d}| \ll T^{1-\varepsilon}.
\end{split}
\end{align*}

 Thus, we only need to focus on the contribution from the main terms in Lemma \ref{Lem-Landau}  in the process. As the estimations are similar, it suffices to treat the sum
\begin{align}
\label{sumoverlog}
\begin{split}
& \frac {T}{2\pi}\sum_{a, b, c,d}\Big (\frac {A_{a,b,c,d}}{\sqrt{ab}cd}\frac {\Lambda((ac^2)/(bd^2))}{\sqrt{(ac^2)/(bd^2)}}+\frac {A_{a,b,c,d}}{\sqrt{ab}cd}\frac {\Lambda((bd^2)/(ac^2))}{\sqrt{(bd^2)/(ac^2)}} \Big ).
\end{split}
\end{align}

  As $\Lambda(n)$ is supported on positive prime powers, we consider the case $\Lambda((ac^2)/(bd^2))$ or $\Lambda((bd^2)/(ac^2))$ takes the value $\log q$ for a fixed prime $q$.  Without loss of generality, we may assume that $q \in I_2$ and $j>2$. It is then easy to see that we must have $c=d$ and $a, b$ can be written in the form:
\begin{align*}
\begin{split}
 a=(n_2q^{l_1})\prod_{\substack{0 \leq l \leq \mathcal J \\ l \neq 2}}n_l, \quad b=(n_2q^{l_2})\prod_{\substack{0 \leq l \leq \mathcal J \\ l \neq 2}}n_l,
\end{split}
\end{align*}
  where $l_1$, $l_2 \geq 0$, $l_1 \neq l_2$, $(n_2,q)=1$ and the integers $n_l$ satisfy (note that there is no contribution from the terms in the expansion of $|{\mathcal Q}_2(\rho, k)|^{2}$)
\begin{align*}
\begin{split}
 & q_0(n_0)=b_{1,m}(n_1)=b_2(n_2q^{l_1})=b_2(n_2q^{l_2})=q_{j+1}(n_{j+1})=1, \\
 & b^2_l(n_l)+q^2_l(n_l)=1, \quad 2 \leq l \leq \mathcal J, \ \ l \neq j+1.
\end{split}
\end{align*}

  It follows that the contribution from a fixed $q \in I_2$ to the expression in \eqref{sumoverlog} equals
\begin{align}
\label{doublesum}
\begin{split}
 \frac {T}{2\pi}\sum_{\substack{l_1 \geq 0, l_2 \geq 0 \\ l_1 \neq l_2}} & \frac {\log q}{q^{|l_1-l_2|/2}}\Big ( \sum_{\substack{n_2 \\ (n_2, q)=1}}v_{n_2q^{l_1}}v_{n_2q^{l_2}} \Big ) \sum_{\substack{n_0}}w^2_0(n_0)\sum_{\substack{n_1}}v^2_1(n_1) \sum_{\substack{n_{j+1}}}w^2_{j+1}(n_{j+1}) \prod_{\substack{3 \leq l \leq \mathcal J \\ l \neq j+1}} \sum_{\substack{n_l}}\Big (v^2_l(n_l)+w^2_l(n_l) \Big ) \\
=& \frac {T}{2\pi}\sum_{\substack{l_1 \geq 0, l_2 \geq 0 \\ l_1 \neq l_2}} \frac {\log q}{q^{|l_1-l_2|/2+(l_1+l_2)/2}}\frac {c_j(q, k)^{l_1+l_2}}{l_1!l_2!}\Big ( \sum_{\substack{n_2 \\ (n_2, q)=1}}\frac{c^2_j(n_2,k)b_2(n_2q^{l_1})b_2(n_2q^{l_2})}{g^2(n_2)n_2} \Big ) \\
& \hspace*{2cm} \times \sum_{\substack{n_0}}w^2_0(n_0)\sum_{\substack{n_1}}v^2_1(n_1) \sum_{\substack{n_{j+1}}}w^2_{j+1}(n_{j+1}) \prod_{\substack{3 \leq l \leq \mathcal J \\ l \neq j+1}} \sum_{\substack{n_l}}\Big (v^2_l(n_l)+w^2_l(n_l) \Big ).
\end{split}
\end{align}

   As in the proof of Proposition \ref{Prop4}, we remove the restriction in $b_2(n_2q^{l_1})$ on $\Omega(n_2q^{l_1})$ and in $b_2(n_2q^{l_2})$ on $\Omega(n_2q^{l_2})$ to get that the sum on the right-hand side in \eqref{doublesum} over $n_2$  becomes
\begin{align*}
  & \sum_{\substack{n_2 \\ (n_2, q)=1}}\frac{{c^2_j(n_2,k)}}{g^2(n_2)n_2}  =
  \prod_{\substack{p \in I_2 \\ p \neq q}}\left( 1+\frac{k^2}{p} +O\left( \frac 1{p^2} \right)  \right) \ll  \exp \Big(\sum_{p \in I_2}\frac{k^2}{p}+O \Big( \sum_{p \in I_2}\frac 1{p^2} \Big) \Big).
\end{align*}

  Moreover, if $\Omega(n_2q^{l_i}) \geq \ell_2$ for $i=1,2$,  then $2^{\Omega(n_2)+l_i-\ell_2}\ge 1$.  Thus, we apply Rankin's trick and the error introduced in the relaxation of the conditions on $b_2$ is
\begin{align*}
 \ll &  \big(2^{l_1}+2^{l_2} \big ) 2^{-\ell_2} \sum_{\substack{n_2 \\ (n_2, q)=1}}\frac{2^{\Omega(n_2)}|\lambda(n_{2})|^2}{g^2(n_2)n_2} \ll
\big(2^{l_1}+2^{l_2} \big ) 2^{-\ell_2/2}\exp \Big(\sum_{p \in I_2}\frac{k^2}{p}+O \Big( \sum_{p \in I_2}\frac 1{p^2} \Big) \Big).
\end{align*}

   We then deduce that the contribution to \eqref{sumoverlog} from the case $\Lambda(ac^2/bd^2)=\Lambda(bd^2/ac^2)=\log q$ for a fixed prime $q \in I_2$ is
\begin{align*}
 \sum_{\substack{l_1 \geq 0, l_2 \geq 0 \\ l_1 \neq l_2}} & \frac {\log q}{q^{|l_1-l_2|/2+(l_1+l_2)/2}}\frac {c_j(q, k)^{l_1+l_2}}{l_1!l_2!}\Big (1+O \big ( \big (2^{l_1}+2^{l_2} \big ) 2^{-\ell_2/2} \big ) \Big )  \exp \left( \sum_{p \in I_2}\frac{k^2}{p}+O\left( \sum_{p \in I_2}\frac 1{p^2} \right) \right) \\
 \ll & \Big (1+O \big (  2^{-\ell_2/2} \big )\Big ) \left( \frac{\log q}{q} + O \left( \frac
 {\log q}{q^2} \right) \right)  \exp \Big(\sum_{p \in I_2}\frac{k^2}{p}+O \Big( \sum_{p \in I_2}\frac 1{p^2} \Big) \Big).
\end{align*}

   Similar estimations carry over to the sums over $v^2_l(n_l)$ for $3 \leq l \leq \mathcal J, l \neq j+1$ in \eqref{doublesum} so that we have
\begin{align*}
\sum_{\substack{n_{l}}}v^2_{l}(n_{l}) =&  \Big (1+O \big (  2^{-\ell_l/2} \big )\Big ) \exp \Big(\sum_{p \in I_l}\frac{k^2}{p}+O \Big( \sum_{p \in I_2}\frac 1{p^2} \Big) \Big).
\end{align*}
  To treat the other sums,  we note that
\begin{align*}
\sum_{\substack{n_0}}w^2_0(n_0) \leq &  \frac {\Big (2^{m/10-1}\Big )^{2\lceil 2^{m/2}\rceil }(\lceil 2^{m/2}\rceil!)^2}{\lceil 2^{m/2}\rceil!}\Big (\sum_{2^m < p } \frac {1}{p^2}\Big )^{\lceil 2^{m/2}\rceil }, \\
\sum_{\substack{n_{j+1}}}w^2_{j+1}(n_{j+1}) \leq &  \frac {\Big ( 2\alpha^{3/4}_{j+1}\Big)^{2\lceil 1/(10\alpha_{j+1})\rceil }\Big ( \lceil 1/(10\alpha_{j+1})\rceil)! \Big )^2}{\lceil 1/(10\alpha_{j+1})\rceil)! } \Big ( \sum_{\substack{ p\in I_{j+1} } }\frac {c^2_u(p,1)}{p} \Big )^{\lceil 1/(10\alpha_{j+1})\rceil }, \\
\sum_{\substack{n_{l}}}w^2_{l}(n_{l}) \leq &  \frac {\Big ( 2(1-k)e^{(e(1-1/k))^{-1}}\alpha^{3/4}_l \Big )^{2\lceil 1-1/k \rceil\ell_l}((\lceil 1-1/k \rceil\ell_l)!)^2}{(\lceil 1-1/k \rceil\ell_l)!} \Big ( \sum_{\substack{ p\in I_{l} } }\frac {1}{p} \Big )^{\lceil 1-1/k\rceil \ell_l}, \quad l \neq j+1.
\end{align*}
  We apply \eqref{sumpj} and \eqref{Stirling} and note that $\alpha_{j+1}=2\alpha_j$, deducing from the above that
\begin{align*}
\begin{split}
\sum_{\substack{n_0}}w^2_0(n_0) \ll & e^{-m2^{m/2}/5}\exp \Big( \sum_{2^m < p \leq 2^{m+1}}\frac{k^2}{p}+O\Big( \sum_{p \in I_{j+1}}\frac 1{p^2} \Big) \Big), \\
\sum_{\substack{n_{j+1}}}w^2_{j+1}(n_{j+1})  &  \ll  e^{-\frac {80}{\alpha_{j+1}}}\exp \Big( \sum_{p \in I_{j+1}}\frac{k^2}{p}+O\Big( \sum_{p \in I_{j+1}}\frac 1{p^2} \Big) \Big)=e^{-\frac {4}{\alpha_{j}}}\exp \Big( \sum_{p \in I_{j+1}}\frac{k^2}{p}+O \Big(\sum_{p \in I_{j+1}}\frac 1{p^2} \Big) \Big), \\
\sum_{\substack{n_{l}}}w^2_{l}(n_{l}) \ll & 2^{-\ell_l/2} \exp \Big(\sum_{p \in I_l}\frac{k^2}{p}+O\Big( \sum_{p \in I_l}\frac 1{p^2} \Big) \Big).
\end{split}
\end{align*}

The bounds above enable us to derive that \eqref{doublesum} is
\begin{align*}
\begin{split}
 \ll &  e^{-m2^{m/2}/5}e^{-\frac {4}{\alpha_{j}}}\prod^{\mathcal J}_{l=1}\big(1+O(2^{-\ell_l/2})\big) \exp \Big( \sum_{p \in \bigcup^{\mathcal J}_{l=1} I_l} \frac{k^2}{p}+O\Big( \sum_{p \in \bigcup^{\mathcal J}_{l=1} I_l}\frac 1{p^2} \Big) \Big) \Big ( \frac{\log q}{q} + O \Big( \frac
 {\log q}{q^2} \Big) \Big ) \\
\ll &  e^{-m2^{m/2}/5}e^{-\frac {4}{\alpha_{j}}}\exp \Big( \sum_{p \in \bigcup^{\mathcal J}_{l=1} I_l} \frac{k^2}{p}+O\Big( \sum_{p \in \bigcup^{\mathcal J}_{l=1} I_l}\frac 1{p^2} \Big) \Big) \Big ( \frac{\log q}{q} + O\Big(\frac
 {\log q}{q^2} \Big) \Big ).
\end{split}
\end{align*}

Summing over $q$ renders that \eqref{sumoverlog} is
\begin{align*}
\begin{split}
\ll &  T e^{-m2^{m/2}/5}e^{-\frac {4}{\alpha_{j}}}\exp \Big( \sum_{p \in \bigcup^{\mathcal J}_{l=1} I_l} \frac{k^2}{p}+O\Big( \sum_{p \in \bigcup^{\mathcal J}_{l=1} I_l}\frac 1{p^2} \Big) \Big) \sum_{q \leq T^{\alpha_{\mathcal J}}} \Big (  \frac{\log q}{q} + O \Big( \frac
 {\log q}{q^2} \Big) \Big ) \\
\ll & 10^{-M} T (\log T)^{k^2+1} e^{-m2^{m/2}/5}e^{-\frac {4}{\alpha_{j}}},
\end{split}
\end{align*}
utilizing Lemma \ref{RS}.  We then conclude from this and \eqref{Sudef} that
\begin{align*}
\begin{split}
 S_u \ll & T (\log T)^{k^2+3}e^{(2-k)2^{m/2+4}-m2^{m/2}/5}e^{-\frac {2}{\alpha_{j}}}.
\end{split}
\end{align*}

Now summing over $u$ in \eqref{Sdef} gives rise to the bound
\begin{align*}
\begin{split}
 S \ll & T (\log T)^{k^2+3}e^{(2-k)2^{m/2+4}-m2^{m/2}/5}e^{-\frac {2}{\alpha_{j}}}(\mathcal{J}-j) \ll T (\log T)^{k^2+3}e^{(2-k)2^{m/2+4}-m2^{m/2}/5}e^{-\frac {1}{\alpha_{j}}},
\end{split}
\end{align*}
  where the last estimation above follows by observation that for $1 \leq j \leq \mathcal{J}$,
\begin{align*}
\mathcal{J}-j \leq \frac{\log(1/\alpha_{j})}{\log 20} \leq \frac 1{\alpha_{j}} .
\end{align*}

Finally, the bound in \eqref{sumovermj} emerges from summing over $j$ and $m$, thus completing the proof of the proposition.

\vspace*{.5cm}

\noindent{\bf Acknowledgments.}  P. G. is supported in part by NSFC grant 11871082 and L. Z. by the Faculty Silverstar Award PS65447 at the University of New South Wales (UNSW).

\bibliography{biblio}
\bibliographystyle{amsxport}

\vspace*{.5cm}

\noindent\begin{tabular}{p{6cm}p{6cm}p{6cm}}
School of Mathematical Sciences & School of Mathematics and Statistics \\
Beihang University & University of New South Wales \\
Beijing 100191 China & Sydney NSW 2052 Australia \\
Email: {\tt penggao@buaa.edu.cn} & Email: {\tt l.zhao@unsw.edu.au} \\
\end{tabular}

\end{document}